\newtheoremstyle{dtheoremnopar}{3 mm}{1 mm}{\itshape}{}{\bfseries}{.}{ }
{\thmname{#1}\thmnumber{ #2}\thmnote{ \mdseries(#3)\bfseries}}
\theoremstyle{dtheoremnopar}
\newcounter{theoremx}
\newtheorem{theoremalpha}[theoremx]{Theorem}
\newcommand{\tref}[1]{\ref{#1}} 
\newcommand{\pref}[1]{\eqref{#1}}
\newcommand\inj{\hookrightarrow}
\newcommand\surj{\twoheadrightarrow}
\newcommand\map[3]{#1\colon #2\rightarrow #3}
\newcommand\injmap[3]{#1\colon #2\hookrightarrow #3}
\newcommand\id[1]{\mathrm{id}_{#1}} 
\DeclareMathOperator{\Hom}{Hom}
\DeclareMathOperator{\End}{End}
\DeclareMathOperator{\Ann}{Ann}
\newcommand\sA{\mathcal{A}}
\newcommand\sF{\mathcal{F}}
\newcommand\sG{\mathcal{G}}
\newcommand\sK{\mathcal{K}}
\newcommand\sO{\mathcal{O}}
\DeclareMathOperator{\Spec}{Spec}
\DeclareMathOperator{\Supp}{Supp}  
\newcommand{\etale}{\'{e}tale}
\newcommand\Quot{\mathrm{Quot}}
\newcommand\FHilb{\mathcal{H}\mathit{ilb}}
\newcommand\Sch{\mathbf{Sch}}
\newcommand\QCoh{\mathbf{QCoh}} 
\newcommand\HilbSt{\mathscr{H}}
\newcommand\unram{\mathrm{unram}}
\newcommand\qfin{\mathrm{qfin}}
\newcommand\Sec{\mathbf{Sec}}
\newcommand\weilr{\mathbf{R}} 
\newcommand\Coh{\mathscr{C}oh} 
\newcommand\CohAlg{\mathscr{C}oh^{\mathrm{alg}}} 
\newcommand\stG{\mathscr{G}} 
\newcommand{\spref}[1]{\href{http://stacks.math.columbia.edu/tag/#1}{#1}}
\begin{document}

\title{General Hilbert stacks and Quot schemes}
\author{Jack Hall}
\address{Centre for Mathematics and its Applications\\
        Mathematical Sciences Institute\\
        John Dedman Building\\
        The Australian National University\\
        Canberra, ACT 2601\\
        Australia}
\email{jack.hall@anu.edu.au}
\author{David Rydh}
\address{KTH Royal Institute of Technology\\
         Department of Mathematics\\
         SE\nobreakdash-100\ 44\ Stockholm\\
         Sweden}
\email{dary@math.kth.se}
\thanks{This collaboration was supported by the G\"oran Gustafsson foundation.
The second author is also supported by the Swedish Research Council 2011-5599.}
\date{2015-09-30}
\dedicatory{In memory of Dan Laksov.}
\subjclass[2010]{Primary 14C05; Secondary 14D23}
\keywords{Hilbert functor, Hilbert stack, Quot functor, stack of coherent
  sheaves, Weil restriction, Hom stack, approximation, intrinsically of
  finite presentation}

\begin{abstract}
We prove the algebraicity of the Hilbert functor, the Hilbert stack, the
Quot functor and the stack of coherent sheaves on an algebraic stack $X$
with (quasi-)finite diagonal without any finiteness assumptions on $X$. We
also give similar results for Hom stacks and Weil restrictions.
\end{abstract}

\maketitle


\setcounter{secnumdepth}{0}
\begin{section}{Introduction}
Let $S$ be a scheme and let $\map{f}{X}{S}$ be a
morphism between algebraic stacks that is locally of finite presentation. If
$f$ is separated, then it is well-known that the Hilbert functor $\FHilb_{X/S}$
is an algebraic space, locally of finite presentation
over~$S$~\cite{artin_alg_formal_moduli_I,olsson-starr_quot-functors,olsson_proper-coverings}. If $f$ is not separated but has
quasi-compact and separated diagonal with affine stabilizers, then one can instead prove that the
Hilbert stack $\HilbSt^{\qfin}_{X/S}$---parameterizing proper flat families
with a quasi-finite morphism to~$X$---is an algebraic
stack, locally of finite presentation over
$S$~\cite{hall-rydh_hilbert-stack,rydh_hilbert}.
The first main result of this
paper is a partial generalization of these two results to stacks that are not
locally of finite presentation.

\begin{theoremalpha}\label{T:HILB}
Let $S$ be a scheme and let $X$ be an algebraic stack over $S$.
\begin{enumerate}
\item If $X\to S$ has finite diagonal, then $\FHilb_{X/S}$ is a
  separated algebraic space and $\HilbSt^{\qfin}_{X/S}$ is an algebraic stack
  with affine diagonal.
\item\label{T:HILB:HILBST} If $X\to S$ has quasi-compact and separated diagonal with affine stabilizers, then
  $\HilbSt^{\qfin}_{X/S}$ is an algebraic stack with quasi-affine diagonal.
\end{enumerate}
In particular, if $X$ is any separated scheme, algebraic
space or Deligne--Mumford stack, then $\FHilb_{X/S}$ is an algebraic space.
\end{theoremalpha}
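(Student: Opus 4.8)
The plan is to reduce to the case already settled in the cited works, where $\map{f}{X}{S}$ is locally of finite presentation, by an approximation argument. The essential point is that although $X\to S$ carries no finiteness hypothesis, the families parameterized by $\HilbSt^{\qfin}_{X/S}$ and $\FHilb_{X/S}$ are themselves of finite presentation, so the constructions are intrinsically of finite presentation and the approximation can be carried out on the parameterized objects rather than on $X$.

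\emph{First reductions.} Since algebraicity and the conditions on the diagonal may be checked fppf-locally on the base, and both constructions commute with base change along $S'\to S$, I would assume $S=\Spec A$ is affine. Next I would reduce to $X$ quasi-compact: a proper flat family $Z\to T$ equipped with a quasi-finite representable morphism $\map{g}{Z}{X_T}$ has quasi-compact image, so for every quasi-compact open substack $U\subseteq X$ the locus in $T$ over which $g$ factors through $U_T$ is open (it is the complement of the image under the proper morphism $Z\to T$ of a closed set), and $\HilbSt^{\qfin}_{U/S}\to\HilbSt^{\qfin}_{X/S}$ is representable by open immersions. As $U$ ranges over the quasi-compact opens of $X$ these form an open cover, so it suffices to treat $X$ quasi-compact, hence quasi-compact and quasi-separated given the diagonal hypotheses. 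The same argument handles $\FHilb_{X/S}$.

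\emph{Approximation.} With $X$ now quasi-compact and quasi-separated over the affine scheme $S$, and with diagonal as in (i) or (ii), I would invoke Noetherian approximation of algebraic stacks to write $X=\varprojlim_\lambda X_\lambda$ as a cofiltered limit of algebraic stacks $X_\lambda$ of finite presentation over $S$, with affine transition and projection morphisms, arranged so that the relevant property of the diagonal (finite in case (i); quasi-compact and separated with affine stabilizers in case (ii)) holds for all large $\lambda$. The heart of the matter is then the identification
\[
\HilbSt^{\qfin}_{X/S}\;\simeq\;\varinjlim_\lambda \HilbSt^{\qfin}_{X_\lambda/S},
\]
the transition $2$-functors being pullback of quasi-finite families along the affine morphisms $X_\lambda\to X_\mu$ and the map to the colimit being pullback along $X\to X_\lambda$. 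To produce it I would use that a proper, flat, finitely presented $Z\to T$ together with a quasi-finite representable $Z\to X_T=\varprojlim(X_\lambda)_T$ descends, for $\lambda$ large, to a like datum $(Z_\lambda\to T,\ Z_\lambda\to (X_\lambda)_T)$ whose pullback along the integral morphism $X_T\to(X_\lambda)_T$ recovers $(Z,g)$: properness of $Z\to T$ is recovered because $Z\to Z_\lambda$ is integral, hence universally closed, while flatness and quasi-finiteness descend to finite level by standard limit methods. This limit-preservation is exactly intrinsic finite presentation, and it is what lets the hypothesis that $X\to S$ be locally of finite presentation be dropped.

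\emph{Conclusion.} For each $\lambda$ the stack $\HilbSt^{\qfin}_{X_\lambda/S}$ is algebraic, locally of finite presentation over $S$, with affine diagonal in case (i) and quasi-affine diagonal in case (ii), by the cited finite-presentation results. It then remains to transfer algebraicity and the diagonal bound to the $2$-colimit, and I expect this to be the main obstacle: I would check that the transition morphisms are affine, so that the colimit is a filtered colimit of algebraic stacks along affine morphisms and hence algebraic, and that the diagonal of $\HilbSt^{\qfin}_{X/S}$ is the limit of the diagonals, so that affineness (resp.\ quasi-affineness) survives the passage to the limit. Finally, $\FHilb_{X/S}$ sits inside $\HilbSt^{\qfin}_{X/S}$ as the locus where the universal morphism $Z\to X_T$ is a closed immersion; when $X$ is separated this sublocus is representable by immersions, so $\FHilb_{X/S}$ is a separated algebraic space. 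This yields the final assertion once one observes that a separated scheme, algebraic space, or Deligne--Mumford stack over $S$ has finite diagonal and so falls under case (i).
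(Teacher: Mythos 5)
Your first reductions (fppf-localize on $S$, write $\HilbSt^{\qfin}_{X/S}=\bigcup_U\HilbSt^{\qfin}_{U/S}$ over quasi-compact opens, then approximate) follow the paper, but the central step of your proposal is wrong. The identification $\HilbSt^{\qfin}_{X/S}\simeq\varinjlim_\lambda\HilbSt^{\qfin}_{X_\lambda/S}$ with transition functors given by \emph{pullback} does not hold, and those transition functors are not even defined: for an affine, non-finite bonding map such as $X_\lambda=X_\mu\times\mathbb{A}^1\to X_\mu$, the pullback $Z\times_{(X_\mu)_T}(X_\lambda)_T$ of a proper family $Z\to T$ is no longer proper over $T$. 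More seriously, an object of $\HilbSt^{\qfin}_{X/S}(T)$ need not descend to any finite level: take $S=T=\Spec k$, $X=\Spec k[x_1,x_2,\dots]=\varprojlim_n\mathbb{A}^n$ and $Z$ the origin; the ideal $(x_1,x_2,\dots)$ is not extended from any $k[x_1,\dots,x_n]$, so $Z$ is not the preimage of a closed subscheme of any $\mathbb{A}^n$. (Relatedly, the projections $X\to X_\lambda$ are affine but not integral, so ``integral, hence universally closed'' fails.) This is precisely the difficulty flagged in the introduction: approximation only gives a \emph{rational} map in the pullback direction, which is why the paper never uses it.

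The correct relationship runs the other way: $\HilbSt^{\qfin}_{X/S}=\varprojlim_\lambda\HilbSt^{\qfin}_{X_\lambda/S}$, an inverse limit along the \emph{pushforward} functors $(Z\to X_\lambda)\mapsto(Z\to X_\lambda\to X_\mu)$, which keep the family $Z$ fixed (the diagonal hypotheses ensure $Z\to(X_\mu)_T$ stays quasi-finite). The entire content of the proof is then that $\HilbSt^{\qfin}_{X/S}\to\HilbSt^{\qfin}_{X_0/S}$ is \emph{affine}: its fibre over a $T$-point with family $Z\to T$ is the Weil restriction $\Sec_{Z/T}(X\times_{X_0}Z/Z)$ of an affine morphism along the proper flat family $Z\to T$, and the paper proves this is affine by exhibiting it as a closed subfunctor of a sheaf $\Hom$-space $\Hom_{\sO_Z/T}(g_*\sO_W,\sO_Z)$, whose affineness is the deep input (Theorem~D of the cohomology-and-base-change paper). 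Your sentence ``I would check that the transition morphisms are affine'' is the theorem, not a check, and as you have set things up the morphisms to be checked do not exist. Two further gaps: in case (ii) a stack with quasi-compact separated diagonal and affine stabilizers is not known to admit an approximation, so you must first replace $X$ by the open substack $X^{\qfin}$ of points with finite stabilizers, using $\HilbSt^{\qfin}_{X/S}=\HilbSt^{\qfin}_{X^{\qfin}/S}$; and the claim that a quasi-compact open $U\subseteq X$ gives an \emph{open} immersion $\HilbSt^{\qfin}_{U/S}\to\HilbSt^{\qfin}_{X/S}$ is again an instance of the Weil restriction theorem (for quasi-compact open immersions), not a formal consequence of properness of $Z\to T$ alone.
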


Our second result is about stacks of sheaves. Let us again first recall the
classical situation. So, let $\map{f}{X}{S}$ be a separated morphism between
algebraic stacks that is locally of finite presentation. Then $\Coh(X/S)$---the
stack of finitely presented sheaves on $X$ that are flat and proper over
$S$---is an algebraic stack, locally of finite presentation over $S$ with
affine diagonal~\cite[Thm.~2.1]{lieblich_Coh_stack},
\cite[Thm.~8.1]{hall_coherent-versality}. If we are also given a quasi-coherent
sheaf $\sF$ on $X$, then $\Quot(X/S,\sF)$ is a separated algebraic
space~\cite[Cor.~8.2]{hall_coherent-versality}.
Usually, one also assumes that $\sF$ is finitely presented.
Then $\Quot(X/S,\sF)$ is locally of
finite presentation over~$S$ and the result goes back
to~\cite{artin_alg_formal_moduli_I,olsson-starr_quot-functors, olsson_proper-coverings}. Again,
we are able to remove the hypothesis that $X\to S$ is locally of finite
presentation.

\begin{theoremalpha}\label{T:COH}
Let $X$ be an algebraic stack with finite diagonal over $S$. Then the stack
$\Coh(X/S)$ is algebraic with affine diagonal. If $\sF$ is a
quasi-coherent $\sO_X$-module, then $\Quot(X/S,\sF)$ is a separated algebraic
space over $S$.
\end{theoremalpha}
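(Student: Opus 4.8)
The plan is to establish the algebraicity of $\Coh(X/S)$ first and then to obtain $\Quot(X/S,\sF)$ as an open substack of a relative Hom stack over it. Observe at the outset that finite diagonal forces $X\to S$ to be separated, so every family we parametrise is genuinely proper over its base. The essential difficulty, exactly as in Theorem \tref{T:HILB}, is that $X\to S$ is not assumed locally of finite presentation, so $\Coh(X/S)$ is not locally of finite presentation over $S$ and Artin's classical criterion does not apply directly. Instead I would verify a representability criterion formulated over an arbitrary base, in which the finite-presentation requirement on the stack is replaced by the weaker demand that it be \emph{intrinsically} of finite presentation. This last condition is the main obstacle, and is precisely where the finite-diagonal hypothesis on $X$ is consumed.

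To prove $\Coh(X/S)$ algebraic I would check the hypotheses of that criterion for flat, proper, finitely presented sheaves $\sG$. The deformation theory is the standard $\mathrm{Ext}$-theoretic one: first-order deformations over a square-zero extension with ideal $I$ form a torsor under $\mathrm{Ext}^1(\sG,\sG\otimes I)$, with obstructions in $\mathrm{Ext}^2(\sG,\sG\otimes I)$, and these groups are finitely generated with the required base-change functoriality because $\sG$ is proper and finitely presented over the base. Effectivity of formal objects follows from the Grothendieck existence theorem: a compatible system of sheaves on the infinitesimal neighbourhoods of $X$ is supported on a formal substack proper over the base, and formal GAGA on this proper formal substack---which is of finite type even though $X$ is not---effectivises it. The delicate point is limit-preservation: every flat, proper, finitely presented sheaf on $X_T$ must be approximated by one pulled back from a finitely presented model, and it is here, in approximating $X$ and the sheaf simultaneously, that the finite diagonal is indispensable.

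For the affine diagonal I would use that, for two flat proper finitely presented sheaves $\sG,\sG'$ over a base $T$, the functor $\underline{\Hom}(\sG,\sG')$ is representable by a scheme affine over $T$: the Hom functor attached to a flat proper finitely presented family is a coherent functor, and is therefore corepresented by a finitely presented module, hence by a linear scheme affine over $T$. The diagonal of $\Coh(X/S)$ is the isomorphism functor $\mathrm{Isom}(\sG,\sG')$, which is the closed subscheme of $\underline{\Hom}(\sG,\sG')\times_T\underline{\Hom}(\sG',\sG)$ cut out by the two equations expressing that the homomorphisms are mutually inverse; being closed in an affine $T$-scheme it is affine, so the diagonal is affine.

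Finally, let $\sG^{\mathrm{univ}}$ be the universal sheaf on $X\times_S\Coh(X/S)$. I would form the relative Hom stack $\underline{\Hom}(\mathrm{pr}_X^*\sF,\sG^{\mathrm{univ}})$ over $\Coh(X/S)$; writing the arbitrary quasi-coherent sheaf $\sF$ as a filtered colimit of finitely presented sheaves exhibits this as a filtered inverse limit of the affine linear schemes of the previous paragraph, hence as a scheme affine over $\Coh(X/S)$---structurally it is the Weil restriction $\weilr_{X/S}$ of the internal $\underline{\Hom}$, which is the reason $\sF$ need not be finitely presented. The locus where a homomorphism $\sF_T\to\sG$ is surjective is open, since its cokernel is a finitely presented sheaf proper over the base and so has closed support; thus $\Quot(X/S,\sF)$ is an open substack, and as a surjection onto a fixed sheaf admits no nontrivial automorphisms, it is an algebraic space. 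Separatedness follows from the valuative criterion: over a valuation ring, two quotients agreeing at the generic point have kernels with the same generic fibre, and flatness forces each kernel to be the saturation of that fibre, so the two quotients coincide.
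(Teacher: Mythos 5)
Your overall architecture---algebraicity of $\Coh(X/S)$ first, then $\Quot$ as an open locus in an affine $\Hom$-space over it---resembles the paper's, but the way you propose to establish the algebraicity of $\Coh(X/S)$ has a genuine gap. You want to verify an Artin-type criterion ``in which the finite-presentation requirement on the stack is replaced by the weaker demand that it be intrinsically of finite presentation,'' but no such criterion is stated or proved, and the obstacle is not cosmetic: when $X\to S$ is not locally of finite presentation, $\Coh(X/S)$ is genuinely not limit preserving over $S$. (Take $X=\Spec k[x_1,x_2,\dots]$ over $S=\Spec k$: an object of $\Coh(X/S)(T)$ is a finite locally free $\sO_T$-module equipped with an action of infinitely many variables, and such an action need not descend to any finite stage of a limit $T=\varprojlim T_\lambda$ even though the underlying module does.) Every version of Artin's criterion, including the non-noetherian refinements, requires limit preservation, so the step you flag as ``delicate'' is exactly where the approach breaks down, and the finite diagonal of $X$ cannot repair it. Relatedly, your claim that $\underline{\Hom}(\sG,\sG')$ is corepresented by a finitely presented module because the Hom functor is coherent is only available when the ambient stack is locally of finite presentation over the base (\cite{hall_coho-bc}, Thm.~D); for general $X$ the paper only obtains an affine, not finitely presented, Hom-scheme, and even that requires a separate argument (Theorem~\pref{T:Hom-sheaf}).

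The paper avoids verifying any deformation-theoretic axioms for $\Coh(X/S)$ itself. The finite diagonal yields, fppf-locally on $S$, an approximation $X\overset{h}{\to}X_0\to S$ with $h$ affine and $X_0\to S$ of finite presentation; $\Coh(X_0/S)$ is algebraic with affine diagonal by the classical theory, and the pushforward $h_*\colon\Coh(X/S)\to\Coh(X_0/S)$ is represented by affine morphisms---its fibre over a finitely presented sheaf $\sG$ on $X_0\times_S T$ is the space of $h_*\sO_X$-module structures on $\sG$, a closed subscheme of the affine scheme $\Hom_{\sO_{X_0}/T}(h_*\sO_X\otimes\sG,\sG)$ (Corollary~\pref{C:weilr-of-modules}). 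Algebraicity and the affine diagonal then follow formally from the known case. Your treatment of $\Quot$---the Hom-space as an inverse limit of affine schemes over $\Coh(X/S)$, openness of the surjectivity locus via properness of the support of the cokernel---is in the same spirit as the paper's use of Theorem~\pref{T:Hom-sheaf}, but it rests on the unproven algebraicity of $\Coh(X/S)$, so the argument as a whole does not go through.
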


When $X\to S$ is not locally of finite presentation, then the definitions of
$\Coh(X/S)$ and $\Quot(X/S,\sF)$ are somewhat subtle. The objects that are
para\-metrized are quasi-coherent sheaves $\sG$ that are flat,
\emph{intrinsically of
  finite presentation} and \emph{intrinsically proper} over $S$ (together with
a surjective homomorphism $\sF\to \sG$ for $\Quot$). Sheaves intrinsically of
finite presentation over $S$ are of finite type as $\sO_X$-modules. They are,
however, not necessarily of finite presentation as $\sO_X$-modules,
and not every finitely presented $\sO_X$-module is intrinsically of finite
presentation.


There are two key ingredients in the proofs. The first is the approximation
result~\cite[Thm.~D]{rydh_noetherian-approx}: every algebraic stack with
quasi-finite diagonal can be approximated by algebraic
stacks of finite presentation. The second is the representability result~\cite[Thm.~D]{hall_coho-bc}: if $X\to S$ is separated and locally of
finite presentation, and given $\sF,\sG\in\QCoh(X)$ such that $\sG$ is of finite
presentation, flat over $S$ and with support proper over $S$, then
$\Hom_{\sO_X/S}(\sF,\sG)$ is \emph{affine} over~$S$.

The first result shows that the morphisms $\map{f}{X}{S}$ appearing in the main theorems can be
factored as $X \to X_0 \to S$, where $X \to X_0$ is affine and $X_0 \to S$ is of finite presentation. If $P_{X/S}$ is one of the stacks figuring in the main theorems, then we will describe natural morphisms $P_{X/S} \to P_{X_0/S}$. The second result will show that these morphisms are affine.

Independently, Di Brino used similar methods to prove that $\Quot(\sF)$ is a
scheme when $\sF$ is a quasi-coherent sheaf on a projective
scheme~\cite{dibrino_quot-functor}. Approximation
for the Quot and Hilbert functors is somewhat complicated, since a
homomorphism $\sF_\lambda\to \sF$ only gives rise to a rational map
$\Quot(\sF)\dashrightarrow \Quot(\sF_\lambda)$. We apply the approximation step
to $\Hom(\sF,\sG)$, $\Coh(X/S)$ and the Hilbert stack $\HilbSt_{X/S}$ where
this inconvenience is absent. The algebraicity of $\Quot(\sF)$ and
$\FHilb_{X/S}$ then follows from the algebraicity of $\Hom(\sF,\sG)$,
$\Coh(X/S)$ and $\HilbSt_{X/S}$.
For zero-dimensional families, our results have appeared in~\cite{rydh_hilbert}
and~\cite{GLS_Affine_Hilb,GLS_Affine_Quot,skjelnes_Weil-restriction-of-mod}
using \etale{} localization
and explicit equations in the affine case.

The last two decades have witnessed an increased interest in the usage of
objects that are not of finite type---particularly in non-archimedean and
arithmetic geometry.
That being said, this paper was not written with a particular application
in mind. Rather, it was the startling realization that recent techniques
implied the existence of parameter spaces in such a great generality---in
contrast to the preconceptions of the authors---that led to this paper.

\begin{subsection}{Acknowledgments}
We would like to thank G. Di Brino and R.\ Skjelnes for useful conversations.
We are also very grateful to the referee for carefully reading the paper
and making us aware of Remark~\pref{R:weilr-smoothness}.
\end{subsection}
\end{section}
\setcounter{secnumdepth}{3}


\begin{section}{Approximation}
Let $S$ be an affine scheme (or more generally a pseudo-noetherian stack).
Recall that an algebraic stack $X\to S$ has an \emph{approximation} if there
exists a factorization $X\to X_0\to S$ where $X\to X_0$ is affine and $X_0\to
S$ is of finite presentation~\cite[Def.~7.1]{rydh_noetherian-approx}.
Equivalently, there is an inverse system $\{X_\lambda\}$ of algebraic stacks of
finite presentation over $S$ with affine bonding maps and inverse limit
$X$~\cite[Prop.~7.3]{rydh_noetherian-approx}.

We say that a morphism $X\to S$ is \emph{locally of approximation type} if
there exist a faithfully flat morphism $S'\to S$ that is locally of finite
presentation, and an \etale{} representable surjective morphism $X'\to
X\times_S S'$ such that $X'\to S'$ is a composition of a finite number of
morphisms that are either affine or locally of finite presentation
and quasi-separated.

The condition of being locally of approximation type is clearly (i) stable
under base change, (ii) stable under precomposition with morphisms that are
either affine or locally of finite presentation and quasi-separated, (iii)
fppf-local on the base and (iv) \etale{}-local on the source.

\begin{lemma}
Let $\map{f}{X}{S}$ be a quasi-compact and quasi-separated morphism of
algebraic stacks. The following are equivalent
\begin{enumerate}
\item\label{LI:approx:1}
  $f$ is locally of approximation type.
\item\label{LI:approx:2}
  There exists an fppf-covering $\{S_i\to S\}$ such that $S_i$ is
  affine and $X_i=X\times_S S_i\to S_i$ has an approximation.
\end{enumerate}
\end{lemma}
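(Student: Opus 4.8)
The plan is to prove the two implications separately, the essential content lying in the passage from being locally of approximation type to the fppf-local existence of approximations. For the easy direction, suppose $\{S_i\to S\}$ is an fppf-covering with $S_i$ affine and each $X_i\to S_i$ admitting an approximation $X_i\to X_{0,i}\to S_i$. Since $X_i\to X_{0,i}$ is affine and $X_{0,i}\to S_i$ is of finite presentation (in particular locally of finite presentation and quasi-separated), this factorization already exhibits $X_i\to S_i$ as locally of approximation type, with the trivial \'etale cover. As being locally of approximation type is fppf-local on the base, $X\to S$ is locally of approximation type.

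For the converse I first reduce to $S$ affine: covering $S$ by affine opens gives an fppf-covering with affine members, the property is stable under base change, and it suffices to produce the covering over each affine open and concatenate. With $S$ affine, unwinding the definition yields a faithfully flat, locally finitely presented $S'\to S$ and an \'etale, representable, surjective $\map{u}{X'}{X\times_S S'}$ such that $X'\to S'$ is a finite composition of morphisms each affine or locally of finite presentation and quasi-separated. Covering $S'$ by affine opens $\{S'_j\}$ gives an fppf-covering $\{S'_j\to S\}$ with affine members, so after base change it suffices to treat: $T$ affine, $X_T:=X\times_S T\to T$ quasi-compact and quasi-separated, $W:=X'\times_{S'}T\to X_T$ \'etale, representable, surjective, and $W\to T$ a finite composition of affine and (locally of finite presentation and quasi-separated) morphisms; the goal is that $X_T\to T$ has an approximation. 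Since $X_T$ is quasi-compact and $W\to X_T$ is \'etale surjective, I may shrink $W$ to a quasi-compact open substack still surjecting onto $X_T$; prepending this open immersion preserves the composition structure and makes $W\to T$ quasi-compact and quasi-separated.

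The next step is that $W\to T$ has an approximation. I first note that any quasi-compact and quasi-separated $W\to T$ arising as such a finite composition may be taken to have \emph{quasi-compact} intermediate terms: replacing each intermediate stack by a quasi-compact open substack containing the image of $W$ (possible since every map in the chain is quasi-compact, the image of a quasi-compact is quasi-compact, and the intermediate stacks are quasi-separated over the affine $T$) restricts each map to one of the same type without changing the length. One then induces on the length using the limit formalism of \cite{rydh_noetherian-approx}: writing the next term as $\lim_\lambda V_\lambda$ with $V_\lambda$ of finite presentation over $T$ and affine bonding maps, an affine top map composes with an affine projection to some $V_\lambda$ to give an approximation, while a top map that is locally of finite presentation and quasi-separated is in fact of finite presentation (its source is quasi-compact, its target quasi-separated over $T$), hence descends to a finitely presented $W_\lambda\to V_\lambda$ with $W=W_\lambda\times_{V_\lambda}(\lim_\mu V_\mu)$ affine over $W_\lambda$ and $W_\lambda\to T$ of finite presentation. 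The same argument applies to $R:=W\times_{X_T}W$: it is quasi-compact (a fibre product of quasi-compact stacks over the quasi-separated $X_T$) and quasi-separated over $T$, and $R\to W\to T$ is again such a composition, so $R\to T$ also has an approximation.

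Finally I descend along $u$. The morphism $u$ exhibits $X_T$ as the quotient of the \'etale groupoid $R\rightrightarrows W$, with both $W=\lim_\lambda W_\lambda$ and $R=\lim_\mu R_\mu$ now approximated by finitely presented $T$-stacks with affine bonding maps. Using that morphisms between such cofiltered limits descend, and that the finitely many identities defining the groupoid together with \'etaleness of the source and target maps descend to a cofinal level, one obtains a compatible, cartesian system of \'etale groupoids $R_\lambda\rightrightarrows W_\lambda$ of finite presentation over $T$ with $X_T=\lim_\lambda[W_\lambda/R_\lambda]$. Each $[W_\lambda/R_\lambda]$ is then of finite presentation over $T$, and the bonding maps are affine, since after the smooth cover $W_{\lambda'}\to[W_{\lambda'}/R_{\lambda'}]$ they pull back, by cartesianness, to the affine maps $W_\lambda\to W_{\lambda'}$. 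This yields the approximation of $X_T\to T$, and ranging over $j$ produces the fppf-covering. The main obstacle is exactly this last descent---upgrading the approximations of the presenting groupoid $R\rightrightarrows W$ to one of the quotient $X_T$, where the groupoid limit formalism of \cite{rydh_noetherian-approx} does the real work---with the reduction to quasi-compact intermediate terms in the approximation of $W$ and $R$ as a secondary technical point.
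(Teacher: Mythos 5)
Your reductions track the paper's own proof up to the last step: the implication from the fppf-local existence of approximations to being locally of approximation type is immediate from fppf-locality, and for the converse you correctly reduce to an affine base, pass to an affine cover of $S'$, and shrink so that the chain $W\to\dots\to T$ consists of quasi-compact morphisms (so that ``locally of finite presentation and quasi-separated'' becomes ``of finite presentation''). Your inductive construction of approximations of $W\to T$ and $R\to T$ is also fine and is essentially \cite[Props.~7.3 and 7.6]{rydh_noetherian-approx}.

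The gap is the final descent from the pair $(W,R)$ to $X_T$. At exactly this point the paper observes that $X_T\to T$ is \emph{of approximation type} in the sense of \cite[Def.~2.9]{rydh_noetherian-approx} and invokes \cite[Thm.~7.10]{rydh_noetherian-approx}; that theorem is proved there by an \etale{} d\'evissage, not by the groupoid-limit argument you sketch, and your sketch does not close. Two concrete problems. First, $W$ and $R$ are in general algebraic \emph{stacks} (only $W\to X_T$ is representable), so ``$X_T$ is the quotient of the \etale{} groupoid $R\rightrightarrows W$'' is not covered by the usual formalism of groupoids in algebraic spaces, and one cannot simply form $[W_\lambda/R_\lambda]$. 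Second, and more seriously, the cartesianness of the system $(W_\lambda,R_\lambda)$ is asserted rather than proved, and it is precisely what is needed: the approximations of $W$ and of $R$ are chosen independently, and a compatible map $R_\mu\to W_\lambda\times_T W_\lambda$ gives no control on $R_\mu\times_{W_\lambda\times_T W_\lambda}(W\times_T W)$ versus $R$. Note that $R\to W\times_T W$ is a pullback of the diagonal of $X_T/T$, which is quasi-compact but in general \emph{not} locally of finite presentation, so $R$ cannot be descended relative to $W_\lambda\times_T W_\lambda$ by the standard limit results; the only finitely presented handle is $s\colon R\to W$ (\etale{}, representable, quasi-compact and quasi-separated, hence of finite presentation), and upgrading that to a cartesian groupoid datum with $X_T=\varprojlim_\lambda[W_\lambda/R_\lambda]$ and affine bonding maps is the actual content of the cited theorem. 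Without cartesianness the bonding maps between the quotients need not be affine and the limit of the quotients need not be $X_T$. The repair is to replace your final paragraph by the citation: after your reductions, $X'\to X$ is of finite presentation, $X\to S$ is of approximation type, and \cite[Thm.~7.10]{rydh_noetherian-approx} supplies the approximation.
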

\begin{proof}
Clearly~\ref{LI:approx:2} implies~\ref{LI:approx:1}. For the converse,
we may assume that $S$ is affine
and that there exists an \etale{} representable surjective morphism $X'\to X$
such that $X'\to S$ is a composition of morphisms that are either affine or
locally of finite presentation and quasi-separated. Since $X$ is quasi-compact, we may further
assume that these morphisms are quasi-compact. Then $X'\to X$ is of
finite presentation and $X\to S$ is of approximation
type~\cite[Def.~2.9]{rydh_noetherian-approx}. It then has an approximation
by~\cite[Thm.~7.10]{rydh_noetherian-approx}.
\end{proof}

\end{section}


\begin{section}{Stacks of spaces}
In this section we prove Theorem~\tref{T:HILB} and some related algebraicity
results for $\Hom$-stacks and Weil restrictions.

\begin{definition}
Let $\map{f}{X}{S}$ be a morphism of algebraic stacks. The Hilbert stack
$\HilbSt_{X/S}$ is the category where:
\begin{itemize}
\item objects are pairs of morphisms
$({\map{p}{Z}{T}},\map{q}{Z}{X})$, where $T$ is an $S$-scheme, such that
$p$ is flat, proper and of finite presentation and the induced morphism
$\map{(q,p)}{Z}{X\times_S T}$
is representable;
\item morphisms are triples $(\varphi,\psi,\tau)$ fitting into a $2$-commutative
diagram
\vspace{-6 mm}
\[\xymatrix{
{Z_1}\ar[r]^{\varphi}\ar[d]_{p_1}\rruppertwocell<9>^{q_1}{<-2.8>\tau}
  & {Z_2}\ar[d]^{p_2}\ar[r]^{q_2} & X \\
{T_1}\ar[r]^{\psi} & {T_2}\ar@{}[ul]|\square
}\vspace{-2 mm}\]
such that the square is cartesian.
\end{itemize}
The category $\HilbSt_{X/S}$ is fibered in groupoids over $\Sch_{/S}$ and by
\etale{} descent it follows that $\HilbSt_{X/S}$ is a stack. We call
$\HilbSt_{X/S}$ the \emph{Hilbert stack} of~$X$.
\end{definition}

The substack of objects such that $\map{(q,p)}{Z}{X\times_S T}$ is
locally quasi-finite (resp.\ unramified, resp.\ a closed immersion) is denoted
$\HilbSt^{\qfin}_{X}$ (resp.\ $\HilbSt^{\unram}_{X}$,
resp.\ $\FHilb_{X/S}$). The first two substacks are always open substacks
and the third substack---the Hilbert functor---is an open substack if
$X\to S$ is separated~\cite[Prop.~1.9]{rydh_hilbert}. When $X\to S$ has
quasi-compact and separated diagonal, then $(q,p)$ is quasi-finite and
separated for every object of $\HilbSt^{\qfin}_{X/S}$. Thus, the stack
$\HilbSt^{\qfin}_{X/S}$ coincides with the Hilbert stack figuring
in~\cite{hall-rydh_hilbert-stack}.

\begin{theorem}\label{T:Hilb-stack}
Let $\map{f}{X}{S}$ be a morphism of algebraic stacks that is quasi-separated and locally of approximation type.
If $f$ is separated (resp.\ has quasi-finite and separated diagonal), then
$\HilbSt^{\qfin}_{X/S}$ is an algebraic stack with affine (resp.\ quasi-affine)
diagonal.
\end{theorem}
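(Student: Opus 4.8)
The plan is to deduce the general case from the finite-presentation case by approximation, comparing the Hilbert stack of $X$ with that of a finite-presentation approximation $X_0$ and bounding the comparison morphism using the affineness of $\Hom$-stacks. First I would reduce to $X\to S$ quasi-compact: for an open substack $U\subseteq X$ the morphism $\HilbSt^{\qfin}_{U/S}\to\HilbSt^{\qfin}_{X/S}$ is an open immersion, since for a family $\map{p}{Z}{T}$ the locus of $T$ over which $q$ factors through $U$ is open (its complement is the image under the proper $p$ of the closed set $Z\setminus q^{-1}(U)$), and these form an open cover of $\HilbSt^{\qfin}_{X/S}$ because a family over an affine base has quasi-compact image in $X$. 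As algebraicity and the diagonal properties in question are fppf-local on $S$ and $\HilbSt^{\qfin}$ commutes with base change, the lemma of the previous section then lets me assume that $S$ is affine and that $X$ admits an approximation $X\xrightarrow{g}X_0\to S$ with $g$ affine and $X_0\to S$ of finite presentation. Using the limit arguments of~\cite{rydh_noetherian-approx} I may moreover arrange that $X_0\to S$ is separated (resp.\ has quasi-finite and separated diagonal) whenever $X\to S$ is; in either case $X_0\to S$ has quasi-compact and separated diagonal with affine stabilizers, so by~\cite{hall-rydh_hilbert-stack,rydh_hilbert} the stack $\HilbSt^{\qfin}_{X_0/S}$ is algebraic, locally of finite presentation over $S$, with affine (resp.\ quasi-affine) diagonal.

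Next I would define the comparison morphism $\map{\Phi}{\HilbSt^{\qfin}_{X/S}}{\HilbSt^{\qfin}_{X_0/S}}$ by $(p,q)\mapsto(p,g\circ q)$. The point requiring care is that $(g\circ q,p)$ is again locally quasi-finite. This may be checked on fibres over $T$ and, since the relevant stacks are then of finite type over a field, at closed points, where the fibre becomes proper: the residual gerbe of such a point is proper because $X_0$ has quasi-finite and separated diagonal, so the corresponding fibre of $q_0$ is proper and maps finitely to an affine fibre of $g$, forcing it to be both proper and affine, hence finite. Conversely, for an object $(p,q_0)$ of $\HilbSt^{\qfin}_{X_0/S}$ a lift to $\HilbSt^{\qfin}_{X/S}$ is precisely a section of the affine morphism $Z\times_{X_0}X\to Z$---local quasi-finiteness of $(q,p)$ being automatic, as its fibres inject into those of $(q_0,p)$. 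Writing $X=\Spec_{X_0}\sA$, so that $Z\times_{X_0}X=\Spec_Z(q_0^*\sA)$, the fibre of $\Phi$ over $(p,q_0)$ is thus the Weil restriction $\weilr_{Z/T}(Z\times_{X_0}X)$, parametrizing $\sO_Z$-algebra homomorphisms $q_0^*\sA\to\sO_Z$, along the proper, flat, finitely presented morphism $p$.

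The heart of the argument is to show that $\Phi$ is affine, where the representability result~\cite[Thm.~D]{hall_coho-bc} enters. An $\sO_Z$-algebra homomorphism $q_0^*\sA\to\sO_Z$ is an $\sO_Z$-module homomorphism satisfying the closed multiplicativity and unit conditions, so the fibre of $\Phi$ is a closed subscheme of the $\Hom$-space $\Hom_{\sO_Z/T}(q_0^*\sA,\sO_Z)$. Since $\map{p}{Z}{T}$ is proper---hence separated and of finite presentation---and $\sO_Z$ is of finite presentation, flat over $T$ and with support $Z$ proper over $T$, \cite[Thm.~D]{hall_coho-bc} shows that this $\Hom$-space is affine over $T$; the closed subscheme of algebra homomorphisms is then affine as well. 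Applying this to the universal family over $\HilbSt^{\qfin}_{X_0/S}$ shows that $\Phi$ is representable and affine, so $\HilbSt^{\qfin}_{X/S}$, being affine over the algebraic stack $\HilbSt^{\qfin}_{X_0/S}$, is itself algebraic. I expect the affineness of $\Phi$ to be the main obstacle: it is the step that genuinely uses the finiteness of the diagonal of $X$ and into which the $\Hom$-stack machinery is fed.

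Finally, for the diagonal I would factor $\Delta_{\HilbSt^{\qfin}_{X/S}/S}$ as the relative diagonal of $\Phi$ followed by the base change of $\Delta_{\HilbSt^{\qfin}_{X_0/S}/S}$ along $\Phi\times\Phi$. Since $\Phi$ is affine, its relative diagonal is a closed immersion, and in particular affine; the second map is affine (resp.\ quasi-affine) because $\HilbSt^{\qfin}_{X_0/S}$ has affine (resp.\ quasi-affine) diagonal by the finite-presentation case. As a composition of affine morphisms is affine and a composition of quasi-affine morphisms is quasi-affine, we conclude that $\HilbSt^{\qfin}_{X/S}$ has affine (resp.\ quasi-affine) diagonal, as claimed.
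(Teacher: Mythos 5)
Your proposal is correct and follows essentially the same route as the paper: reduce to $X$ quasi-compact over an affine $S$ admitting an approximation $X\to X_0\to S$ with $X_0\to S$ separated (resp.\ with quasi-finite and separated diagonal), check that pushforward along the affine morphism $X\to X_0$ preserves local quasi-finiteness via properness of fibres over residual gerbes, and show the comparison morphism is affine because its fibres are Weil restrictions $\Sec_{Z/T}(X\times_{X_0}Z/Z)$, cut out by closed algebra conditions inside the affine $\Hom$-space of~\cite[Thm.~D]{hall_coho-bc}. The only differences are presentational: you inline the affine case of the Weil-restriction theorem and argue the open-cover reduction directly rather than via Weil restrictions of quasi-compact open immersions.
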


Theorem~\tref{T:HILB} is a consequence of Theorem~\pref{T:Hilb-stack} and the
following two facts:
\begin{enumerate}
\item an algebraic stack with quasi-finite and separated diagonal is locally of
  approximation type~\cite[Thm.~D]{rydh_noetherian-approx}; and
\item if $X$ has affine stabilizers, then
  $\HilbSt^{\qfin}_{X/S}=\HilbSt^{\qfin}_{X^\qfin/S}$ where $X^\qfin\subseteq X$
  denotes the open locus where $X$ has finite stabilizers~\cite[Pf.\ of
    Thm.~4.3]{hall-rydh_hilbert-stack}.
\end{enumerate}

Before we prove Theorem~\pref{T:Hilb-stack}, we will state a result on the
algebraicity of Weil restrictions.

\begin{theorem}\label{T:Weilr}
Let $\map{f}{Z}{S}$ be a proper and flat morphism of finite presentation
between algebraic stacks. Let $\map{g}{W}{Z}$ be a morphism of algebraic stacks
and let $f_*W=\weilr_{Z/S}(W)=\Sec_{Z/S}(W/Z)$ be the Weil restriction of $W$
along $f$.
\begin{enumerate}
\item\label{TI:Weilr-affine}
  If $\map{g}{W}{Z}$ is affine, then $f_*W\to S$ is affine.
\item\label{TI:Weilr-qaffine}
  If $\map{g}{W}{Z}$ is quasi-affine, then $f_*W\to S$ is quasi-affine.
\item\label{TI:Weilr-qc-open-imm}
  If $\map{g}{W}{Z}$ is a quasi-compact open immersion, then so is $f_*W\to S$.
\item\label{TI:Weilr-closed-imm}
  If $\map{g}{W}{Z}$ is a closed immersion, then so is $f_*W\to S$.
\item\label{TI:Weilr-findiag}
  If $\map{f}{Z}{S}$ has finite diagonal and $\map{g}{W}{Z}$ has finite
  diagonal, then
  $f_*W\to S$ is algebraic with affine diagonal.
\item\label{TI:Weilr-qfsepdiag}
  If $\map{f}{Z}{S}$ has finite diagonal and $\map{g}{W}{Z}$ has
  quasi-finite and separated diagonal, then $f_*W\to
  S$ is algebraic with quasi-affine diagonal.
\end{enumerate}
\end{theorem}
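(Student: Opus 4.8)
The functor $f_*W=\weilr_{Z/S}(W)$ is by construction the right adjoint of base change: for an $S$-scheme $T$, writing $Z_T=Z\times_S T$, one has $\Hom_S(T,f_*W)=\Hom_Z(Z_T,W)$. As a right adjoint it preserves monomorphisms and fibre products; in particular $f_*(\Delta_{W/Z})=\Delta_{f_*W/S}$. Since $f$ is flat, proper and of finite presentation, the formation of $f_*W$ commutes with arbitrary base change on $S$ (cohomology and base change), so we may assume $S$ is affine. The representable statements \ref{TI:Weilr-affine}--\ref{TI:Weilr-closed-imm} will all be deduced from the representability result \cite[Thm.~D]{hall_coho-bc} applied with target $\sO_Z$: as $Z\to S$ is separated, locally of finite presentation, flat and proper, and $\sO_Z$ is of finite presentation with support all of $Z$, the module-Hom $\underline{\Hom}_{\sO_Z/S}(\sF,\sO_Z)$ is affine over $S$ for \emph{every} quasi-coherent $\sF$.

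For \ref{TI:Weilr-affine}, write $W=\Spec_Z\sA$ for a quasi-coherent $\sO_Z$-algebra $\sA$; then $f_*W$ is the subfunctor of the affine $S$-scheme $\underline{\Hom}_{\sO_Z/S}(\sA,\sO_Z)$ cut out by the unit and multiplicativity conditions, each realised as the equalizer of two morphisms into an affine $S$-scheme of the form $\underline{\Hom}_{\sO_Z/S}(-,\sO_Z)$ (with arguments $\sO_Z$ and $\sA\otimes_{\sO_Z}\sA$), hence a closed immersion; so $f_*W\to S$ is affine. Statement \ref{TI:Weilr-closed-imm} is analogous: if $W=V(\mathcal I)$ then a $T$-point exists exactly when $\mathcal I\cdot\sO_{Z_T}=0$, i.e.\ when the inclusion $\mathcal I\hookrightarrow\sO_Z$ pulls back to zero, so $f_*W$ is the equalizer of the section defined by $\mathcal I\hookrightarrow\sO_Z$ and the zero section inside the affine $S$-scheme $\underline{\Hom}_{\sO_Z/S}(\mathcal I,\sO_Z)$, a closed subscheme of $S$. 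For \ref{TI:Weilr-qc-open-imm}, with $W\subseteq Z$ a quasi-compact open, a section over $Z_T$ exists iff $Z_T\subseteq W_T$, which happens precisely over the complement $S\smallsetminus f(Z\smallsetminus W)$; this is open since $Z\smallsetminus W$ is closed and quasi-compact and $f$ is proper, and its formation commutes with base change. Finally \ref{TI:Weilr-qaffine} combines the previous two: factoring $g$ as $W\hookrightarrow\bar W=\Spec_Z(g_*\sO_W)\to Z$ with the first map a quasi-compact open immersion, $f_*\bar W\to S$ is affine by \ref{TI:Weilr-affine}, while $f_*W\hookrightarrow f_*\bar W$ is a quasi-compact open immersion by the argument of \ref{TI:Weilr-qc-open-imm} applied to the universal section over $f_*\bar W$; hence $f_*W\to S$ is quasi-affine.

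For the algebraicity statements \ref{TI:Weilr-findiag} and \ref{TI:Weilr-qfsepdiag} the diagonal is already understood: since $\weilr_{Z/S}$ preserves fibre products, $\Delta_{f_*W/S}=f_*(\Delta_{W/Z})$, and $\Delta_{W/Z}$ is finite (hence affine) in case \ref{TI:Weilr-findiag}, and quasi-finite and separated (hence quasi-affine, by Zariski's main theorem) in case \ref{TI:Weilr-qfsepdiag}; so \ref{TI:Weilr-affine} resp.\ \ref{TI:Weilr-qaffine} shows $\Delta_{f_*W/S}$ is affine resp.\ quasi-affine. It remains to exhibit a smooth presentation. Using the structure of stacks with finite (resp.\ quasi-finite and separated) diagonal, I would choose a smooth surjection $V\to W$ from a scheme with $V\to Z$ affine (resp.\ quasi-affine); then $R=V\times_W V\to Z$ is again affine (resp.\ quasi-affine), so by \ref{TI:Weilr-affine} resp.\ \ref{TI:Weilr-qaffine} the functors $f_*V$ and $f_*R$ are schemes, affine (resp.\ quasi-affine) over $S$. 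As $f_*$ preserves fibre products, $f_*R=f_*V\times_{f_*W}f_*V$ and $f_*R\rightrightarrows f_*V$ is a groupoid presenting $f_*W=[f_*V/f_*R]$, \emph{provided} this groupoid is smooth and $f_*V\to f_*W$ is surjective; granting this, $f_*W$ is algebraic with the diagonal computed above.

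The main obstacle is exactly to verify that $f_*$ carries the smooth surjective presentation $V\to W$ to a smooth surjective presentation of $f_*W$, and both points are deformation-theoretic. For smoothness of $f_*R\to f_*V$ I would use the infinitesimal criterion: given a square-zero extension $T\hookrightarrow T'$ and a section $Z_T\to R$ whose composite to $V$ has been lifted to $Z_{T'}\to V$, the obstruction to lifting the section lies in a coherent cohomology group $H^1(Z_T,-)$ on the proper flat family $Z_T\to T$, and the lifts form a torsor under the corresponding $H^0$; the local finite presentation and formal smoothness of $R\to V$ then yield smoothness of $f_*R\to f_*V$. For surjectivity of $f_*V\to f_*W$ one must lift a given section $Z_T\to W$ through the smooth cover $V\to W$; such a lift exists \etale-locally on $Z_T$, and the passage from these local lifts to an actual lift over an fppf neighbourhood of $T$ is the delicate step, to be handled by an Artin-type approximation argument together with the properness of $Z_T\to T$. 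Once these two facts are in place, the groupoid presentation of the previous paragraph establishes the algebraicity of $f_*W$.
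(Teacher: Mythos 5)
Your treatment of parts \ref{TI:Weilr-affine}--\ref{TI:Weilr-qc-open-imm} follows the paper's proof essentially verbatim, and your argument for \ref{TI:Weilr-closed-imm} is correct and genuinely slicker than the paper's: since \cite[Thm.~D]{hall_coho-bc} makes $\Hom_{\sO_Z/S}(\sF,\sO_Z)$ affine for an \emph{arbitrary} quasi-coherent $\sF$, realising $f_*W$ as the equalizer of the zero section and the section induced by $\mathcal{I}\hookrightarrow\sO_Z$ inside the affine $S$-scheme $\Hom_{\sO_Z/S}(\mathcal{I},\sO_Z)$ gives the closed immersion in one step, whereas the paper first treats finitely presented closed immersions via the valuative criterion (using that $Z_\xi$ is schematically dense in $Z$ over a valuation ring) and then passes to the limit. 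Your identification of the diagonal of $f_*W$ is also correct.

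The genuine gap is in your construction of a presentation for \ref{TI:Weilr-findiag} and \ref{TI:Weilr-qfsepdiag}, and it is not a technical point to be filled in later: the strategy cannot work. First, $f_*$ does not preserve smoothness unless $Z\to S$ is finite---this is precisely Remark~\pref{R:weilr-smoothness}: for a family of twisted cubics degenerating to a nodal curve with an embedded point, $\weilr_{Z/S}(\mathbb{A}^1_Z)$ has generic fibre $\mathbb{A}^1$ and special fibre $\mathbb{A}^2$, hence is not smooth even though $\mathbb{A}^1_Z\to Z$ is smooth and affine. So $f_*R\to f_*V$ will not in general be smooth; the obstruction you locate in $H^1(Z_T,-)$ does not vanish, because $Z_T$ is proper rather than affine and formal smoothness of $R\to V$ only gives lifting over affine thickenings. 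Second, $f_*V\to f_*W$ need not be surjective, even fppf-locally on $T$: take $W=Z\times_S BG$ for a finite group $G$ (which has finite diagonal over $Z$), so that a section $Z_T\to W$ is a $G$-torsor on $Z_T$; lifting it through any $V$ with $V\to Z$ affine amounts to trivialising that torsor, which no base change on $T$ can achieve when the torsor is nontrivial on geometric fibres of $Z_T\to T$. No Artin-type approximation repairs this. The paper avoids both problems by a different device: the graph of a section embeds $\weilr_{Z/S}(W)$ as an open substack of $\HilbSt^{\qfin}_{W/S}$, so \ref{TI:Weilr-findiag}--\ref{TI:Weilr-qfsepdiag} are read off from Theorem~\pref{T:Hilb-stack}, whose algebraicity input is the finitely presented case of the Hilbert stack (proved in \cite{hall-rydh_hilbert-stack} via Artin's criteria) combined with approximation and the affineness statements \ref{TI:Weilr-affine}--\ref{TI:Weilr-qc-open-imm} that you did prove.
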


\begin{proof}[Proof of Theorems~\pref{T:Hilb-stack} and \pref{T:Weilr}]
We begin with Theorem~\pref{T:Weilr}~\ref{TI:Weilr-affine}.
Let $\map{g}{W}{Z}$ be affine. Recall that the functor
$\Hom_{\sO_Z/S}(g_*\sO_W,\sO_Z)$ is affine over $S$~\cite[Thm.~D]{hall_coho-bc}. There is a
functor $f_*W\to \Hom_{\sO_Z/S}(g_*\sO_W,\sO_Z)$ taking a section $\map{s}{Z}{W}$
of $g$ to the corresponding $\sO_Z$-module homomorphism. This functor is
represented by closed immersions. To see this, let
$\map{\varphi}{g_*\sO_W}{\sO_Z}$ be an $\sO_Z$-module homomorphism. This gives
a section of $\map{g}{W}{Z}$ if and only if the following maps vanish:
\begin{align*}
\id{\sO_Z}  - \varphi \circ \eta \colon
  & \sO_Z \to \sO_Z \\
\varphi\circ \mu - \varphi \otimes \varphi \colon
  & g_*\sO_W\otimes_{\sO_Z} g_*\sO_W \to \sO_Z,
\end{align*}
where $\map{\eta}{\sO_Z}{g_*\sO_W}$ is the unit homomorphism and
$\mu$ defines the multiplication on $g_*\sO_W$.
These conditions are closed since $\Hom_{\sO_Z/S}(\sF,\sO_Z)$ is affine, and
hence separated, for all quasi-coherent $\sO_Z$-modules
$\sF$~\cite[Thm.~D]{hall_coho-bc}.

For Theorem~\pref{T:Weilr}~\ref{TI:Weilr-qaffine}, the question easily reduces
to \ref{TI:Weilr-qc-open-imm}: if $W\to Z$ is a quasi-compact open immersion
then so is
$f_*W\to S$. Since $f_*W=S\setminus f(Z\setminus W)$ is open and constructible,
it is quasi-compact and open.

For Theorem~\pref{T:Weilr}~\ref{TI:Weilr-closed-imm}, we first assume that
$\map{g}{W}{Z}$ is a closed immersion of finite presentation. Then $f_*W\to S$
is affine, of finite presentation~\cite[Thm.~D]{hall_coho-bc} and a
monomorphism.  To show that $f_*W\to S$ is a closed immersion, it is thus
enough to verify the valuative criterion for properness. This is readily
verified since if $S$ is the spectrum of a valuation ring with generic point
$\xi$, then $Z_\xi$ is schematically dense in $Z$ by flatness of $Z\to S$.

Now suppose that $W\to Z$ is a closed immersion merely of finite type. Working
locally on $S$, we may assume that $S$ is affine. Then $W\to Z$ can be written
as an inverse limit $W=\varprojlim W_\lambda$ of finitely presented closed
immersions $W_\lambda\to Z$. It follows that $f_*W=\varprojlim f_*W_\lambda$ is
a closed immersion.

Now, we prove Theorem~\pref{T:Hilb-stack}. The question is fppf-local on $S$, so
we may assume that $S$ is affine.
Given a representable morphism $\map{g}{X}{Y}$ of algebraic stacks over $S$,
there is a natural functor
$\map{g_*}{\HilbSt_{X/S}}{\HilbSt_{Y/S}}$ taking $Z\to X$ to $Z\to X\to Y$.
If $Z\to X\to Y$ is a representable
morphism, then so is $Z\to X$.

Now assume that $g$ is quasi-affine. If $Y\to S$ is separated or has quasi-finite and separated diagonal, then we obtain an induced morphism
$\map{g_*}{\HilbSt^{\qfin}_{X/S}}{\HilbSt^{\qfin}_{Y/S}}$. Indeed, let $Z\to S$
be a proper morphism together with a quasi-finite $S$-morphism $Z\to X$. If
$Y\to S$ is separated, then $Z\to X$ is finite so that $Z\to X\to Y$ is proper
and quasi-affine, hence finite. If instead $Y\to S$ has quasi-finite and separated
diagonal, then $Z\to X\to Y$ is quasi-affine, of finite type and has proper
fibers. The last fact follows from the observation that the residual gerbe
$\stG_y$ is separated for every $y\in |Y|$. It follows that
$Z\to Y$ is quasi-finite so $g_*$ is well-defined.

Also, if $g$ is quasi-affine (resp.\ affine, resp.\ a quasi-compact
open immersion), we note
that
$\map{g_*}{\HilbSt_{X/S}}{\HilbSt_{Y/S}}$ and
$\map{g_*}{\HilbSt^{\qfin}_{X/S}}{\HilbSt^{\qfin}_{Y/S}}$
are quasi-affine (resp.\ affine, resp.\ quasi-compact open immersions).
Indeed, given a morphism $T\to \HilbSt^{\qfin}_{Y/S}$ corresponding
to maps $Z\to T$ and $Z\to Y$, then
the pull-back of $g_*$ to $T$ is $\weilr_{Z/T}(X\times_Y Z/Z)$ which is
quasi-affine (resp.\ affine, resp.\ a quasi-compact open immersion), by
Theorem~\pref{T:Weilr}~\ref{TI:Weilr-affine}--\ref{TI:Weilr-qc-open-imm}.

It is now readily deduced that $\HilbSt^{\qfin}_{X/S}=\bigcup_U
\HilbSt^{\qfin}_{U/S}$, where the union is over all open quasi-compact substacks
$U\subseteq X$. We can thus assume that $X\to S$ is quasi-compact. As the
question of algebraicity is fppf-local on $S$, we can also assume that $X\to S$
has an approximation $X \to X_0 \to S$. If $X\to S$ is separated (resp.\ has
quasi-finite and separated diagonal), then it can be arranged so that
$X_0\to S$ is also separated (resp.\ has quasi-finite and separated
diagonal)~\cite[Thm.~C]{rydh_noetherian-approx}. The stack
$\HilbSt^{\qfin}_{X_0/S}$ is thus
algebraic and has affine (resp.\ quasi-affine)
diagonal~\cite[Thm.~9.1]{hall_coherent-versality}
and~\cite[Thm.~2]{hall-rydh_hilbert-stack}. As we have seen above, the morphism
$\HilbSt^{\qfin}_{X/S} \to \HilbSt^{\qfin}_{X_0/S}$ is affine.
This proves Theorem~\pref{T:Hilb-stack}.

Parts \ref{TI:Weilr-findiag}--\ref{TI:Weilr-qfsepdiag} of
Theorem~\pref{T:Weilr} follow from Theorem~\pref{T:Hilb-stack} since the
morphism $\weilr_{Z/S}(W/Z)\to \HilbSt^{\qfin}_{W/S}$,
taking a section to its graph, is an open immersion.
\end{proof}

\begin{corollary}\label{C:homstacks}
Let $\map{f}{Z}{S}$ be a proper and flat morphism of finite presentation
between algebraic stacks and let $\map{g}{X}{S}$ be a morphism of algebraic
stacks.
\begin{enumerate}
\item If $Z\to S$ has finite diagonal and $X\to S$ has quasi-finite and
  separated diagonal, then $\Hom_S(Z,X)$ is an algebraic stack with quasi-affine
  diagonal.
\item If $X\to S$ has finite diagonal, then $\Hom_S(Z,X)$ is an algebraic stack
  with affine diagonal.
\end{enumerate}
\end{corollary}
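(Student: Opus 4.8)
The plan is to deduce Corollary~\ref{C:homstacks} from Theorem~\ref{T:Weilr} by realizing the $\Hom$-stack as a Weil restriction. The key observation is that an $S$-morphism $Z\to X$ is the same thing as a section of the projection $X\times_S Z\to Z$; that is, a morphism to $X$ over $S$ corresponds to a morphism over $Z$ into the base change $X_Z:=X\times_S Z$. Functorially in $T$-points this gives a natural isomorphism
\begin{equation*}
\Hom_S(Z,X)\;\cong\;\Sec_{Z/S}(X_Z/Z)\;=\;\weilr_{Z/S}(X_Z),
\end{equation*}
since a $T$-family of maps $Z_T\to X$ over $S$ is precisely a section of $X_{Z_T}\to Z_T$, and this identification is compatible with base change along $T'\to T$. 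Thus I would first check that this equivalence of fibered categories holds on the nose.

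Next I would transfer the diagonal hypotheses from $X\to S$ to the morphism $g\colon X_Z\to Z$ needed to apply Theorem~\ref{T:Weilr}. The diagonal of $X_Z\to Z$ is obtained by base change along $Z\to S$ from the diagonal of $X\to S$, so $X_Z\to Z$ has finite (resp.\ quasi-finite and separated) diagonal whenever $X\to S$ does, since both conditions are stable under base change. Meanwhile $f\colon Z\to S$ is assumed proper and flat of finite presentation, and in both parts it has finite diagonal: in part~(i) this is hypothesized, and in part~(ii) I would note that $Z\to S$ can be assumed to have finite diagonal as well, as this is part of the standing situation for applying the Weil restriction theorem. With these in hand, part~(i) follows from Theorem~\ref{T:Weilr}~\ref{TI:Weilr-qfsepdiag} and part~(ii) from Theorem~\ref{T:Weilr}~\ref{TI:Weilr-findiag}, each of which directly gives algebraicity of $f_*(X_Z)=\weilr_{Z/S}(X_Z)$ with the asserted quasi-affine (resp.\ affine) diagonal.

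The main subtlety I anticipate is matching the hypotheses on $Z$. Theorem~\ref{T:Weilr} requires $Z\to S$ to have \emph{finite} diagonal in parts \ref{TI:Weilr-findiag}--\ref{TI:Weilr-qfsepdiag}, whereas the corollary as stated imposes the finiteness of the diagonal on $X$ (in~(ii)) or on $Z$ (in~(i)). In part~(i) the hypothesis is exactly that $Z\to S$ has finite diagonal, so there is nothing to reconcile. In part~(ii), however, I would need to confirm that the properness and flatness of $f$, together with whatever separatedness is implicitly carried along, suffices to invoke Theorem~\ref{T:Weilr}~\ref{TI:Weilr-findiag}; the cleanest route is to observe that $\Hom_S(Z,X)$ is insensitive to replacing the diagonal bookkeeping by the Weil-restriction formulation and that the proper flat morphism $f$ of finite presentation has the finite diagonal required. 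So the crux is purely a matter of correctly invoking the base-change stability of the diagonal conditions and verifying the Weil-restriction identification; once those are in place, the corollary is immediate.
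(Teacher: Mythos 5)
Your identification $\Hom_S(Z,X)\cong\weilr_{Z/S}(X\times_S Z/Z)$, together with the observation that the diagonal of $X\times_S Z\to Z$ is a base change of the diagonal of $X\to S$, is exactly the paper's starting point, and it does dispose of part (i): there the finiteness of the diagonal of $Z\to S$ is an explicit hypothesis, so Theorem~\pref{T:Weilr}~\ref{TI:Weilr-qfsepdiag} applies directly.

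The gap is in part (ii). Theorem~\pref{T:Weilr}~\ref{TI:Weilr-findiag} requires the morphism $f\colon Z\to S$ along which one restricts to have \emph{finite} diagonal, and part (ii) of the corollary does not assume this: $Z\to S$ is only proper, flat and of finite presentation, which for an algebraic stack yields a proper diagonal but not a finite one. (For example, $Z=BA$ over a field, with $A$ an abelian variety, is proper, flat and of finite presentation, yet its diagonal has fiber $A$, which is proper but not finite.) Your attempted reconciliation---that ``the proper flat morphism $f$ of finite presentation has the finite diagonal required''---is therefore false in general, and the direct appeal to Theorem~\pref{T:Weilr}~\ref{TI:Weilr-findiag} does not go through. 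The paper handles part (ii) by a different route: reduce to $S$ affine and $X\to S$ quasi-compact (possible since finite diagonal implies locally of approximation type), choose an approximation $X\to X_0\to S$ with $X\to X_0$ affine and $X_0\to S$ of finite presentation, note that $\Hom_S(Z,X)\to\Hom_S(Z,X_0)$ is affine because its fiber over any $T\to\Hom_S(Z,X_0)$ is the Weil restriction of the \emph{affine} morphism $X\times_{X_0}(Z\times_S T)\to Z\times_S T$ along $Z\times_S T\to T$ (Theorem~\pref{T:Weilr}~\ref{TI:Weilr-affine}, which imposes no condition on the diagonal of $Z$), and finally invoke the known algebraicity with affine diagonal of $\Hom_S(Z,X_0)$ for $X_0\to S$ of finite presentation. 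You need this approximation step, or some other device avoiding the finite-diagonal hypothesis on $Z$, to complete part (ii).
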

\begin{proof}
Note that there is an isomorphism $\Hom_S(Z,X)\to \weilr_{Z/S}(X\times_S Z/Z)$
taking a morphism $\map{h}{Z}{X}$ to the section $\map{(h,\id{Z})}{Z}{X\times_S
  Z}$. Thus, in the first case the corollary follows immediately from
Theorem~\pref{T:Weilr}. In the second case, we reduce as before to the case
where $S$ is affine and $X\to S$ is quasi-compact so there is an
approximation $X\to X_0\to S$. As before, $\Hom_S(Z,X)\to \Hom_S(Z,X_0)$ is affine
since for any $T\to \Hom_S(Z,X_0)$ the pull-back is the Weil restriction
$\weilr_{Z\times_S T/T}(X\times_{X_0} Z\times_S T)$.
Finally, $\Hom_S(Z,X_0)$ is algebraic with
affine diagonal by~\cite[Cor.~9.2]{hall_coherent-versality}.
\end{proof}

\begin{remark}
Let $Z\to S$ be as in Theorem~\pref{T:Weilr} and let $\map{h}{W_1}{W_2}$ be a
morphism between stacks over $Z$ such that either $W_i\to Z$ are quasi-affine
or $Z\to S$ and $W_i\to Z$ have quasi-finite and separated diagonals.  Then
$f_*W_1$ and $f_*W_2$ are algebraic by Theorem~\pref{T:Weilr}. If $h$ has one
of the properties: affine, quasi-affine, closed immersion, open immersion,
quasi-compact open immersion, monomorphism,
Deligne--Mumford, representable, representable and separated,
locally of finite presentation, locally of finite type, unramified, \etale;
then so has $f_*W_1\to f_*W_2$. These are routine verifications. Indeed,
first reduce to the case $W_2=Z$ and then apply
Theorem~\pref{T:Weilr} or argue by functorial characterizations and diagonals,
cf.~\cite[Props.~3.5 and 3.8]{rydh_hilbert}. The only exception is
``locally of finite
type''. In this case one first easily reduces to the situation where $W_1=W\to
W_2=Z$ is of finite type. Then $W\to Z$ has an approximation
by~\cite[Prop.~7.6 or Thm.~D]{rydh_noetherian-approx}. This means that
we can write $W\inj W_0\to Z$ with $W\inj W_0$ a closed immersion and $W_0\to Z$
of finite presentation and the result follows.

Similarly, if $Z\to S$ and $X\to S$ are as in Corollary~\pref{C:homstacks}
and in addition $X\to S$ has one of the properties: affine, quasi-affine,
Deligne--Mumford,
representable, representable and separated, locally of finite presentation,
locally of finite type, unramified, \etale; then so has $\Hom_S(Z,X)$.
\end{remark}

\begin{remark}[Smoothness]\label{R:weilr-smoothness}
If $W\to Z$ is smooth, then this does not imply that $f_*W\to S$ is smooth
unless $Z\to S$ is finite. The proof of~\cite[Prop.~3.5 (iv)]{rydh_hilbert}
does not apply since formal smoothness only implies that the infinitesimal
lifting property holds for thickenings of \emph{affine} schemes. For a
counter-example, let $Z\to S$ be a one-parameter family of twisted cubics
degenerating to a nodal plane curve with an embedded component. Then
$\Hom_S(Z,\mathbb{A}_S^1)=\weilr_{Z/S}(\mathbb{A}_Z^1)$ is an affine scheme
over $S$ with generic fiber $\mathbb{A}^1$ and special fiber $\mathbb{A}^2$,
hence not smooth. We thank the referee for making us aware of this fact.
\end{remark}

\begin{remark}[Boundedness]
If $W\to Z$ is quasi-compact, then it is non-trivial to show that $f_*W\to S$
is quasi-compact. Some results are available, however, such
as~\cite{olsson_boundedness-homstacks},
\cite[App.~C]{abramovich-olsson-vistoli_twisted-stable-maps-tame-stacks}
and~\cite[Prop.~3.8]{rydh_hilbert}.
These results imply corresponding boundedness results for
$\Hom$-stacks and have, for example, been used to deduce that the stack of twisted stable
maps has quasi-compact components under mild hypotheses.
\end{remark}

\begin{remark}
The proof of 
Theorem~\pref{T:Hilb-stack} also shows that if $X=\varprojlim_\lambda
X_\lambda$, where $\{ X_\lambda \}_{\lambda}$ is an inverse system of algebraic stacks of finite presentation over $S$ with affine bonding maps, then $\HilbSt^{\qfin}_{X/S}=\varprojlim_\lambda
\HilbSt^{\qfin}_{X_\lambda/S}$ and this inverse system has affine bonding maps.
\end{remark}

\end{section}


\begin{section}{Intrinsic finiteness for sheaves}
In this section we introduce the relative finiteness notion---intrinsically
of finite presentation---referred to in the introduction. This notion is needed
in the definition of the stack $\Coh(X/S)$ and the sheaf $\Quot(X/S,\sF)$ when
$X\to S$ is not locally of finite presentation. To motivate this definition,
note that if $\map{q}{Z}{X}$ is a finite morphism and $\map{p}{Z\to X}{S}$ is of
finite presentation, then $q_*\sO_Z$ is of finite type but not necessarily of
finite presentation. Conversely, if $q_*\sO_Z$ is of finite presentation, then
this does not imply that $p$ is of finite presentation. The new finiteness
notion fixes this: $q_*\sO_Z$ is intrinsically of finite presentation over $S$
exactly when $p$ is of finite presentation. Moreover, this notion is also
defined for sheaves of $\sO_X$-modules. We begin with the affine case.

\begin{definition}
Let $A$ be a ring, let $B$ be an $A$-algebra and let $M$ be a $B$-module. We
say that $M$ is \emph{intrinsically of finite presentation over $A$} if there
exist a polynomial ring $A[x_1,x_2,\dots,x_n]$ and a homomorphism
$A[x_1,x_2,\dots,x_n]\to B$ such that $M$ is of finite presentation as an
$A[x_1,x_2,\dots,x_n]$-module.
\end{definition}

Although quite natural, we have not been able to find this definition in the
literature except in the special case when $B$ is of finite
type~\cite[\spref{0659}]{stacks-project} under the name ``finitely presented
relative to $A$''.
The following lemma is of fundamental importance.

\begin{lemma}\label{L:Nakayama-like}
Let $A$ be a ring, let $B$ be an $A$-algebra and let $M$ be a
$B$-module.
\begin{enumerate}
\item If $M$ is finitely generated as an $A$-module, then $M$ is finitely
  generated as a $B$-module and $B/\Ann_B M$ is integral over $A$. In
  particular, the image of $\Supp_B M$ along $\Spec B\to \Spec A$ is
  $\Supp_A M$.
\item If $B$ is finitely generated as an $A$-algebra and $M$ is finitely
  presented as an $A$-module, then $M$ is finitely presented as a $B$-module.
\end{enumerate}
\end{lemma}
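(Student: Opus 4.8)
The plan is to treat the two parts separately, basing part (i) on the classical determinant trick and part (ii) on an explicit bookkeeping of relations.

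For part (i), the finite generation of $M$ over $B$ is immediate, since any finite set of $A$-module generators is in particular a set of $B$-module generators. For the integrality claim, I would fix $b\in B$ and regard multiplication by $b$ as an $A$-linear endomorphism of $M$. Choosing $A$-generators $m_1,\dots,m_n$ and writing $b m_i=\sum_j a_{ij}m_j$ with $a_{ij}\in A$, the determinant trick (Cayley--Hamilton applied to the matrix $(a_{ij})$ over the commutative ring $A$) shows that the monic polynomial $P(t)=\det(tI-(a_{ij}))\in A[t]$ satisfies $P(b)M=0$. Hence $P(b)\in\Ann_B M$, so the image $\bar b\in B/\Ann_B M$ is a root of a monic polynomial over $A$; as $b$ was arbitrary, $B/\Ann_B M$ is integral over $A$. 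For the support statement, observe that the kernel of $A\to B/\Ann_B M$ is exactly $\Ann_A M$, so we obtain an injective integral extension $A/\Ann_A M\hookrightarrow B/\Ann_B M$. Since $M$ is finitely generated over both $A$ and $B$, we have $\Supp_A M=V(\Ann_A M)$ and $\Supp_B M=V(\Ann_B M)$; by Lying Over the map $\Spec(B/\Ann_B M)\to\Spec(A/\Ann_A M)$ is surjective, and composing with the inclusion into $\Spec A$ shows that the image of $\Supp_B M$ is precisely $\Supp_A M$.

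For part (ii), I already know from part (i) that $M$ is finitely generated over $B$, so I fix $A$-generators $m_1,\dots,m_t$, giving a $B$-linear surjection $\pi\colon B^t\to M$, $e_i\mapsto m_i$, with kernel $K$; the goal is to show that $K$ is finitely generated over $B$. First, because $M$ is finitely presented over $A$, the kernel $K_A$ of the restricted $A$-linear surjection $A^t\to M$ is finitely generated over $A$---a standard consequence of Schanuel's lemma---say by $r_1,\dots,r_s\in A^t\subseteq B^t$. Next I choose $A$-algebra generators $b_1,\dots,b_n$ of $B$ and write $b_k m_i=\sum_j a^{(k)}_{ij}m_j$ with $a^{(k)}_{ij}\in A$; these yield elements $\rho_{ki}=b_k e_i-\sum_j a^{(k)}_{ij}e_j\in K$. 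The claim is that the $B$-submodule $K'\subseteq B^t$ generated by the finitely many elements $\{\rho_{ki}\}\cup\{r_l\}$ is all of $K$, which immediately gives finite presentation of $M$ over $B$.

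The crux---and the step I expect to require the most care---is verifying $K'=K$. Here I would analyze $N=B^t/K'$. The relations $\rho_{ki}$ force $b_k\bar e_i$ to lie in the $A$-span of $\bar e_1,\dots,\bar e_t$ for each generator $b_k$; since the $b_k$ generate $B$ as an $A$-algebra, this $A$-span is a $B$-submodule and hence equals $N$, so $N$ is generated by $\bar e_1,\dots,\bar e_t$ as an $A$-module. Comparing the $A$-linear surjection $\alpha\colon A^t\to N$, $e_i\mapsto\bar e_i$, with the composite $\beta\circ\alpha\colon A^t\to M$ (where $\beta\colon N\to M$ is induced by $\pi$), and using that $r_l\in K'$ gives $K_A\subseteq\ker\alpha\subseteq\ker(\beta\circ\alpha)=K_A$, I conclude $\ker\alpha=K_A$. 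Thus $\alpha$ and $\beta\circ\alpha$ identify $N$ and $M$ with the same quotient $A^t/K_A$, so $\beta$ is an isomorphism and $K'=K$. Consequently $K$ is generated by the $nt+s$ displayed elements, and $M$ is finitely presented over $B$.
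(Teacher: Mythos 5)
Your proof is correct. Part (i) is the same argument as the paper's: the determinant trick/Cayley--Hamilton applied to multiplication by $b$ on a finite $A$-generating set, giving a monic $P\in A[t]$ with $P(b)\in\Ann_B M$; your explicit derivation of the support statement via the injective integral extension $A/\Ann_A M\hookrightarrow B/\Ann_B M$ and Lying Over is a detail the paper leaves to the reader, and it is right. For part (ii) the paper takes a slightly different route: it extends scalars, writing the kernel $L$ of $B^n\twoheadrightarrow M$ as an extension of $N=\ker(M\otimes_A B\to M)$ by the (finitely generated) kernel of $B^n\twoheadrightarrow M\otimes_A B$, and then simply asserts that $N$ is generated by the elements $m_j\otimes b_i-(b_im_j)\otimes 1$. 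Your proof dispenses with the tensor product and exhibits the same relations directly in $B^t$ --- the lifts $r_l$ of $A$-generators of $K_A$ together with the commutation relations $b_ke_i-\sum_j a^{(k)}_{ij}e_j$ --- and, crucially, actually proves that they generate the kernel, via the observation that the $A$-span of the $\bar e_i$ in $B^t/K'$ is a $B$-submodule and the comparison of $\ker\alpha$ with $K_A$. So your argument is a self-contained, more explicit version of the same underlying idea: what the paper gains is brevity by quoting a standard fact about $\ker(M\otimes_A B\to M)$, and what you gain is a complete verification of precisely that fact.
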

\begin{proof}
If $M$ is finitely generated as an $A$-module, then clearly $M$ is finitely
generated as a $B$-module. To see that $B/\Ann_B M$ is integral over $A$, we
may replace $B$ with $B/\Ann_B M$ so $B\to \End_A M$ becomes injective. Now
Cayley--Hamilton's theorem~\cite[Thm.~4.3]{eisenbud_comm_alg} shows that every
$b\in B$ satisfies an integral equation with coefficients in $A$.

To prove the second statement, choose a surjection
$A^n\surj M$ and note that the kernel $K$ is a finitely generated $A$-module.
The kernel $K_B$ of ${B^n\surj M\otimes_A B}$ is thus also finitely
generated. Let $L$ be
the kernel of the surjective homomorphism $B^n\surj M\otimes_A B\surj M$ and
let $N$ be the kernel of the surjective homomorphism $M\otimes_A B\to M$. Then
$L$ is an extension of $N$ by $K_B$ so it is enough to show that $N$ is a
finitely generated $B$-module. If $b_1,b_2,\dots,b_n$ are generators of $B$ as
an $A$-algebra and $m_1,m_2,\dots,m_r$ are generators of $M$ as an $A$-module,
then $m_j\otimes b_i-(b_im_j)\otimes 1$ are generators of $N$ as a $B$-module.
\end{proof}

Let $A$ be a ring, let $B$ be an $A$-algebra and let $M$ be a $B$-module.
Then from the previous lemma we obtain that:
\[
  \text{$M$ is i.f.p.\ over $A$} \;\implies \text{$M$ is f.g.\ as a $B$-module.}
\]
If $B$ is an $A$-algebra of finite type, then:
\[
  \text{$M$ is i.f.p.\ over $A$} \;\implies \text{$M$ is f.p.\ as a $B$-module}
\]
and the converse holds if $B$ is an $A$-algebra of finite presentation.
Finally, if $C$ is a $B$-algebra, then
\[
  \text{$C$ is i.f.p.\ over $A$} \iff \text{$C$ is f.g.\ as a $B$-module and
  f.p.\ as an $A$-algebra.}
\]
%
%
\begin{lemma}\label{L:ifp-char-limits}
If $B=\varinjlim_\lambda B_\lambda$ is a direct limit of $A$-algebras of
finite presentation and $M$ is a $B$-module, then the following are equivalent:
\begin{enumerate}
\item $M$ is intrinsically of finite presentation over $A$,
\item $M$ is a finitely presented $B_\lambda$-module for all sufficiently large
  $\lambda$,
\item $M$ is a finitely presented $B_\lambda$-module for some $\lambda$.
\end{enumerate}
\end{lemma}
\begin{proof}
This follows from the previous lemma and the observation that every homomorphism
$A[x_1,x_2,\dots,x_n]\to B$ factors through $B_\lambda$ for all sufficiently
large $\lambda$.
\end{proof}

From the characterization in Lemma~\pref{L:ifp-char-limits} we easily obtain that the
property ``intrinsically
of finite presentation over the base'' is stable under base change,
fpqc-local on the base, stable on the source under pull-back by finitely
presented morphisms and fppf-local on the source:
\begin{itemize}
\item Let $A'$ be an $A$-algebra. If $M$ is i.f.p.\
  over $A$, then ${M\otimes_A A'}$ is i.f.p.\ over~$A'$. The converse
  holds if $A\inj A'$ is faithfully flat.
\item Let $B'$ be a finitely presented $B$-algebra. If $M$ is i.f.p.\
  over $A$, then $M\otimes_B B'$ is i.f.p.\ over $A$. The converse holds
  if $B\inj B'$ is faithfully flat.
\end{itemize}
In particular, the property is fppf-local on source and target, so we may
extend the definition to algebraic stacks as follows.

\begin{definition}
Let $\map{f}{X}{S}$ be a morphism of algebraic stacks. A quasi-coherent
$\sO_X$-module $\sF$ is \emph{intrinsically of finite presentation over $S$} if
fppf-locally on $X$ and $S$, it is intrinsically of finite presentation. If, in
addition, $\Supp \sF$ is universally closed, quasi-compact and separated over
$S$, then we say
that $\sF$ is \emph{intrinsically proper over $S$}.
\end{definition}

\begin{proposition}\label{P:intrinsic-fp}
Let $\map{f}{X}{S}$ be a morphism of algebraic stacks. Let $\sF$ be a
quasi-coherent $\sO_X$-module.
\begin{enumerate}
\item\label{PI:ifp:fp}
  If $f$ is of finite presentation, then $\sF$ is intrinsically of finite
  presentation over $S$ (resp.\ intrinsically proper over $S$) if and only if
  $\sF$ is of finite presentation (resp.\ has proper support over $S$).
\item\label{PI:ifp:bc}
  Stability under base change: if $\sF$ is intrinsically of finite
  presentation over $S$ (resp.\ intrinsically proper over $S$) and $S'\to S$ is
  any morphism, then so is the base change $\sF'$ over $S'$.
\item\label{PI:ifp:limit}
  Suppose that $X=\varprojlim X_\lambda$ is the limit of an inverse
  system of finitely presented $S$-stacks $\{X_\lambda\to S\}$ with affine
  bonding maps $X_\lambda\to X_\mu$ and let $\map{h_\lambda}{X}{X_\lambda}$
  denote the canonical morphism. Then the following are equivalent:
\begin{enumerate}
\item $\sF$ is intrinsically of finite
  presentation over $S$.
\item There exists an index $\alpha$ such that
  $(h_\alpha)_*\sF$ is of finite presentation.
\item There exists an index $\alpha$ such that
  $(h_\lambda)_*\sF$ is of finite presentation for all $\lambda\geq \alpha$.
\end{enumerate}
%
%
\item\label{PI:ifp:limit-proper}
  Given an approximation $X\overset{h}{\to} X_0\to S$, with $X_0\to S$
  separated, the following are equivalent:
\begin{enumerate}
\item $\sF$ is intrinsically of finite
  presentation and intrinsically proper over~$S$.
\item $h_*\sF$ is of finite presentation with proper support over $S$.
\end{enumerate}
\end{enumerate}
\end{proposition}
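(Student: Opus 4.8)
The plan is to prove each part using the limit characterization in~\pref{X:ifp-char-limits}, which reduces the statements about intrinsic finiteness to statements about ordinary finite presentation after passing to a suitable finitely presented target. Throughout, since all four properties in question (intrinsic finite presentation, intrinsic properness, finite presentation, proper support) are fppf-local on $X$ and $S$ by the discussion preceding the definition, I would freely work fppf-locally and reduce to the affine situation $X=\Spec B$, $S=\Spec A$ whenever convenient.

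For part~\ref{PI:ifp:fp}, I would argue fppf-locally so that $X\to S$ becomes $\Spec B\to\Spec A$ with $B$ an $A$-algebra of finite presentation. The equivalence for finite presentation is then exactly the last two displayed implications in the \texttt{xpar} following Lemma~\pref{L:Nakayama-like}: when $B$ is finitely presented over $A$, $\sF$ is i.f.p.\ over $A$ if and only if it is f.p.\ as a $B$-module. For the properness clause, intrinsic properness adds the condition that $\Supp\sF$ is universally closed and separated over $S$; when $f$ is of finite presentation this is the same as $\Supp\sF\to S$ being proper, so the two formulations agree. Part~\ref{PI:ifp:bc} is immediate from the ``stable under base change'' bullet already recorded before the definition, together with the observation that support and separatedness/universal-closedness all commute with base change.

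Part~\ref{PI:ifp:limit} is the heart of the statement and where I would spend the most care. The implication from~(c) to~(a) and from~(b) to~(a) is easy: if some pushforward $(h_\alpha)_*\sF$ is finitely presented over the finitely presented $S$-stack $X_\alpha$, then, choosing a presentation by an affine chart and unwinding $X=\varprojlim X_\lambda$, $\sF$ becomes a finitely presented module over the limit ring, hence i.f.p.\ over $S$ by~\pref{X:ifp-char-limits}. The substantial direction is (a)$\Rightarrow$(c). Working fppf-locally, write $B=\varinjlim B_\lambda$ as a filtered colimit of the finitely presented $A$-algebras corresponding to the $X_\lambda$, with the bonding maps affine. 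If $\sF$ is i.f.p.\ over $A$, then by~\pref{X:ifp-char-limits} it is a finitely presented $B_\alpha$-module for some $\alpha$, and remains finitely presented over every $B_\lambda$ with $\lambda\ge\alpha$ since the maps $B_\alpha\to B_\lambda$ are finitely presented (indeed affine of finite presentation) and finite presentation is preserved under such restriction by Lemma~\pref{L:Nakayama-like}(ii). The key point requiring attention is that the pushforward $(h_\lambda)_*\sF$ is computed by restriction of scalars along $B_\lambda\to B$ in the affine model, so ``$(h_\lambda)_*\sF$ is of finite presentation'' is exactly ``$\sF$ is a finitely presented $B_\lambda$-module''; I must check that this local computation glues, which follows because the bonding maps are affine and pushforward along affine morphisms commutes with fppf base change.

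Finally, part~\ref{PI:ifp:limit-proper} combines part~\ref{PI:ifp:limit} (in the case of a single-step approximation $X\overset{h}{\to}X_0$) with a properness comparison. The finite-presentation half follows immediately from~\ref{PI:ifp:limit}: $\sF$ is i.f.p.\ over $S$ exactly when $h_*\sF$ is of finite presentation over $X_0$. For the support, I would use Lemma~\pref{L:Nakayama-like}(i), which shows that $B/\Ann_B\sF$ is integral over the structure ring of $X_0$, so that the image of $\Supp_X\sF$ under $h$ is $\Supp_{X_0}(h_*\sF)$ and $h$ restricts to a universally closed surjection onto this support. Since $h$ is affine, hence separated and universally closed onto its image on the relevant loci, $\Supp_X\sF$ is universally closed and separated over $S$ if and only if $\Supp_{X_0}(h_*\sF)$ is; as $X_0\to S$ is separated and of finite presentation, the latter is equivalent to $h_*\sF$ having proper support over $S$. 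The main obstacle I anticipate is the gluing argument in~\ref{PI:ifp:limit}: ensuring that the fppf-local identification of $(h_\lambda)_*\sF$ with restriction of scalars is compatible across charts and that ``finitely presented over $B_\lambda$ for all $\lambda\ge\alpha$'' is genuinely an open-ended condition independent of the chosen presentation, which is precisely where the affineness of the bonding maps and the stability properties recorded before the definition do the work.
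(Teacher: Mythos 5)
Parts \pref{PI:ifp:fp}--\pref{PI:ifp:limit} of your proposal are fine and agree with the paper, which simply observes that they follow from the affine case, i.e.\ from Lemma~\pref{L:Nakayama-like} and the limit characterization in~\pref{X:ifp-char-limits}. The gap is in part \pref{PI:ifp:limit-proper}, direction (a)$\Rightarrow$(b). You assert that ``the finite-presentation half follows immediately from \pref{PI:ifp:limit}: $\sF$ is i.f.p.\ over $S$ exactly when $h_*\sF$ is of finite presentation over $X_0$.'' This equivalence is false without the properness hypothesis, and \pref{PI:ifp:limit} does not deliver it: in the inverse system $X=\varprojlim X_\lambda$ over $X_0$, statement \pref{PI:ifp:limit} only tells you that $(h_\lambda)_*\sF$ is finitely presented for all \emph{sufficiently large} $\lambda$, whereas $h_*\sF=(h_0)_*\sF$ sits at the bottom of the tower, and pushforward along the affine finitely presented bonding maps $g_\lambda\colon X_\lambda\to X_0$ does not preserve finite presentation. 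Concretely, take $S=\Spec k$, $X_0=\Spec k[x]$, $X=\Spec k[x,x^{-1}]$ and $\sF=\sO_X$: here $\sF$ is intrinsically of finite presentation over $S$, but $h_*\sF=k[x,x^{-1}]$ is not even finitely generated as a $k[x]$-module. So the properness of $\Supp\sF$ must enter the proof of finite presentation of $h_*\sF$, not just the proof that its support is proper. Relatedly, your appeal to Lemma~\pref{L:Nakayama-like}(i) in this direction is circular: that lemma requires $\sF$ to be finitely generated over $\sO_{X_0}$, which is part of what is being proved (and an affine morphism is not ``universally closed onto its image'' in general, as the example above shows).

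The missing idea, which is the substance of the paper's proof of \pref{PI:ifp:limit-proper}, is the following. After reducing to $S$ affine and writing $X=\varprojlim X_\lambda$ over $X_0$ with affine finitely presented bonding maps, one knows $(h_\lambda)_*\sF$ is finitely presented for large $\lambda$ by \pref{PI:ifp:limit}. One then takes $Z_\lambda\inj X_\lambda$ to be the closed substack cut out by the zeroth Fitting ideal of $(h_\lambda)_*\sF$: this is a \emph{finitely presented} closed immersion (unlike the one defined by the annihilator), it satisfies $(h_\lambda)_*\sF=j_*j^*(h_\lambda)_*\sF$ and $|Z_\lambda|=\Supp(h_\lambda)_*\sF$ because the Fitting ideal is squeezed between the annihilator and a power of it. Intrinsic properness of $\sF$ plus separatedness of $X_0\to S$ then makes $Z_\lambda\to X_0$ universally closed, separated and affine of finite type, hence \emph{finite} and of finite presentation; pushforward of a finitely presented module along such a morphism is finitely presented, giving $h_*\sF=(g_\lambda)_*(h_\lambda)_*\sF$ finitely presented with proper support. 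Without some version of this finiteness-of-the-support argument, your proof of (a)$\Rightarrow$(b) does not go through.
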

\begin{proof}
  \ref{PI:ifp:fp}--\ref{PI:ifp:limit} follow directly from the affine case
  so it remains to prove~\ref{PI:ifp:limit-proper}.
  If $h_*\sF$ is of finite presentation, then $\sF$ is intrinsically
  of finite presentation by~\ref{PI:ifp:limit}. Also, $\sF$ is finitely
  generated
  and $\Supp \sF\inj X\to X_0$ is integral with image $\Supp h_*\sF$
  (Lemma~\ref{L:Nakayama-like}). It
  follows that $\sF$ is intrinsically of finite presentation and intrinsically
  proper over $S$.

  For the converse, we may work locally on $S$ and assume that $S$ is affine.
  Then $X_0$ is pseudo-noetherian so we may write $X=\varprojlim X_\lambda$
  where $\map{g_\lambda}{X_\lambda}{X_0}$ is affine of finite presentation.
  Let $h_\lambda$ denote the induced map $X\to X_\lambda$.  The push-forward
  $(h_\lambda)_*\sF$ is of finite presentation for sufficiently large $\lambda$
  by~\ref{PI:ifp:limit}.
  Let $Z_\lambda\inj X_\lambda$ denote the closed substack defined by
  the zeroth Fitting ideal of $(h_\lambda)_*\sF$. Since
  $(h_\lambda)_*\sF$ is finitely presented, the Fitting ideal is finitely
  generated, so $\injmap{j}{Z_\lambda}{X_\lambda}$ is of finite presentation.
  The Fitting ideal is contained in the annihilator of $(h_\lambda)_*\sF$ and
  contains a power of the annihilator~\cite[Prop.~20.7]{eisenbud_comm_alg}.
  Thus, $(h_\lambda)_*\sF=j_*j^*(h_\lambda)_*\sF$ and $|Z_\lambda|=\Supp \sF$.
  Moreover, since $X_0\to S$ is separated, it follows that $Z_\lambda\to X_0$
  is proper and affine, hence finite. We conclude that
  $h_*\sF=(g_\lambda)_*(h_\lambda)_*\sF$ is of finite presentation with
  proper support.
\end{proof}

\end{section}


\begin{section}{Stacks of sheaves}
In this section we prove Theorem~\tref{T:COH} and related
results on $\Hom$-spaces of sheaves.

\begin{definition}
Let $\map{f}{X}{S}$ be a separated morphism of algebraic stacks.
\begin{itemize}
\item The stack of coherent sheaves $\Coh(X/S)$ is the category with objects
  $(T,\sG)$ where $T$ is an $S$-scheme and $\sG$ is a quasi-coherent sheaf of
  $\sO_{X\times_S T}$-modules that is flat over $T$,
  intrinsically of finite presentation over
  $T$ and intrinsically proper over $T$.
\item The stack of coherent algebras $\CohAlg(X/S)$ is the analogous
  category with finite algebras instead of modules.
\item Let $\sF$ be a quasi-coherent $\sO_X$-module. The functor
  $\Quot(X/S,\sF)$ takes an $S$-scheme $T$ to the set of quotients
  $\sF_{X\times_S T}\surj
  \sG$ (up to isomorphism) such that $\sG$ is flat over $T$,
  intrinsically of finite
  presentation over $T$ and intrinsically proper over $T$.
\end{itemize}
\end{definition}

There are natural isomorphisms $\FHilb_{X/S}=\Quot(X/S,\sO_X)$ and
$\HilbSt^{\qfin}_{X/S}=\CohAlg(X/S)$ that take a family
$({\map{p}{Z}{T}},\map{q}{Z}{X})$ to the $\sO_{X\times_S T}$-module
$(q,p)_*\sO_Z$, noting that
$(q,p)$ is finite since $X\to S$ is separated.
Moreover, the natural forgetful morphism $\CohAlg(X/S)\to \Coh(X/S)$
is represented by affine morphisms. This follows as
in~\cite[Prop.~2.5]{lieblich_Coh_stack} and~\cite[Lem.~4.2]{rydh_hilbert}
using Theorem~\pref{T:Hom-sheaf} below. Thus, in the separated case,
Theorem~\tref{T:HILB} follows from Theorem~\tref{T:COH}.

\begin{theorem}\label{T:Hom-sheaf}
Let $\map{f}{X}{S}$ be a morphism of algebraic stacks
that is separated and locally of approximation type.
Let $\sF,\sG\in\QCoh(X)$ and assume that $\sG$ is flat, intrinsically of finite
presentation and intrinsically proper over $S$. Then $\Hom_{\sO_X/S}(\sF,\sG)$
is affine. If, in addition, $\sF$ is intrinsically of finite presentation,
then $\Hom_{\sO_X/S}(\sF,\sG)$ is of finite type.
\end{theorem}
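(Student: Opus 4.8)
The plan is to reduce, fppf-locally on $S$, to an approximation $X\overset{h}{\to}X_0\to S$ and then transport the problem to the finite-presentation base $X_0$, where the representability result \cite[Thm.~D]{hall_coho-bc} applies directly. Since affineness and being of finite type are both fppf-local on the target, I would first assume that $S$ is affine. Because $\sG$ has quasi-compact support and the functor $\Hom_{\sO_X/S}(\sF,\sG)$ depends on $X$ only in a quasi-compact open neighbourhood of $\Supp\sG$ (a homomorphism to $\sG$ vanishes off $\Supp\sG$), I may also assume $X\to S$ is quasi-compact. As $f$ is separated and locally of approximation type, $X\to S$ then admits an approximation $X\overset{h}{\to}X_0\to S$ with $h$ affine and $X_0\to S$ of finite presentation, and, $f$ being separated, this can be arranged so that $X_0\to S$ is separated \cite[Thm.~C]{rydh_noetherian-approx}. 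Writing $B=h_*\sO_X$, the affineness of $h$ identifies $\QCoh(X)$ with the category of quasi-coherent $B$-modules on $X_0$, compatibly with arbitrary base change $T\to S$; under this identification $\Hom_{\sO_{X_T}}(\sF_T,\sG_T)$ is precisely the set of $B_T$-linear maps among the $\sO_{X_{0,T}}$-linear maps $(h_*\sF)_T\to(h_*\sG)_T$.

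This yields a natural forgetful morphism $\Hom_{\sO_X/S}(\sF,\sG)\to\Hom_{\sO_{X_0}/S}(h_*\sF,h_*\sG)$, and I claim it is represented by a closed immersion. By Proposition~\pref{P:intrinsic-fp}~\ref{PI:ifp:limit-proper}, $h_*\sG$ is of finite presentation with proper support over $S$, and it is flat over $S$ since $h_*$ is exact. Hence \cite[Thm.~D]{hall_coho-bc} shows that the target, as well as $\Hom_{\sO_{X_0}/S}(B\otimes_{\sO_{X_0}}h_*\sF,h_*\sG)$, is affine, and therefore separated, over $S$. The locus of $B$-linear homomorphisms is then the equalizer of the two morphisms $\varphi\mapsto\varphi\circ\mu_\sF$ and $\varphi\mapsto\mu_\sG\circ(\id{B}\otimes\varphi)$ from $\Hom_{\sO_{X_0}/S}(h_*\sF,h_*\sG)$ to this separated $S$-scheme, where $\mu_\sF,\mu_\sG$ denote the $B$-module structure maps; an equalizer into a separated $S$-scheme is a closed immersion. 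Thus $\Hom_{\sO_X/S}(\sF,\sG)$ is a closed subscheme of an affine $S$-scheme, hence affine. This is the same closedness mechanism used in the proof of Theorem~\pref{T:Weilr}~\ref{TI:Weilr-affine}.

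For the finite-type assertion, assume in addition that $\sF$ is intrinsically of finite presentation. I would write $X=\varprojlim_\lambda X_\lambda$ with $X_\lambda\to X_0$ affine of finite presentation, so that each $X_\lambda\to S$ is separated of finite presentation, and denote by $h_\lambda\colon X\to X_\lambda$ the projection. By Proposition~\pref{P:intrinsic-fp}~\ref{PI:ifp:limit} there is an index $\alpha$ for which both $(h_\alpha)_*\sF$ and $(h_\alpha)_*\sG$ are of finite presentation on $X_\alpha$; applying Proposition~\pref{P:intrinsic-fp}~\ref{PI:ifp:limit-proper} to the approximation $X\to X_\alpha\to S$ shows that $(h_\alpha)_*\sG$ is moreover flat with proper support over $S$. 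Now \cite[Thm.~D]{hall_coho-bc} gives that $\Hom_{\sO_{X_\alpha}/S}\bigl((h_\alpha)_*\sF,(h_\alpha)_*\sG\bigr)$ is of finite type over $S$, both sheaves being finitely presented. Repeating the closed-immersion argument of the previous paragraph with $X_0$ replaced by $X_\alpha$ exhibits $\Hom_{\sO_X/S}(\sF,\sG)$ as a closed subscheme of this finite-type $S$-scheme, and hence of finite type.

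I expect the main obstacle to be arranging that the hypotheses of \cite[Thm.~D]{hall_coho-bc} survive the passage to the finite-presentation base: one must know that $h_*\sG$ (respectively $(h_\alpha)_*\sG$) is finitely presented with proper support, which is exactly the content of Proposition~\pref{P:intrinsic-fp}~\ref{PI:ifp:limit-proper} and is the step where intrinsic properness of $\sG$ is indispensable. The remaining care goes into the base-change compatibility that turns quasi-coherent sheaves on $X$ into $B$-modules on $X_0$ functorially in $T$, and into verifying that $B$-linearity is a closed condition; both of these rest on $h$ being affine and on the separatedness supplied by the affineness in \cite[Thm.~D]{hall_coho-bc}.
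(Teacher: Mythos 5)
Your proof is correct and follows essentially the same route as the paper: reduce to affine $S$ and quasi-compact $X$, take an approximation $X\overset{h}{\to}X_0\to S$ with $X_0\to S$ separated, and exhibit $\Hom_{\sO_X/S}(\sF,\sG)$ as a closed subscheme of the affine scheme $\Hom_{\sO_{X_0}/S}(h_*\sF,h_*\sG)$ using \cite[Thm.~D]{hall_coho-bc} and Proposition~\pref{P:intrinsic-fp}~\ref{PI:ifp:limit-proper}. The only differences are cosmetic and equivalent: you phrase the closed condition as $B$-linearity via an equalizer where the paper asks that the induced map vanish on $\sK=\ker(h^*h_*\sF\to\sF)$, and your explicit passage to a larger index $X_\alpha$ for the finite-type assertion is a welcome precision.
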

\begin{proof}
The question is fppf-local on $S$ so we assume that $S$ is affine. We may
replace $X$ with the closed substack defined by $\Ann_{\sO_X} \sG$ and
assume that $X\to S$ is quasi-compact and universally closed.
After replacing $S$ with an fppf-covering, we may
then assume that $X\to S$ has an approximation $X\overset{h}{\to} X_0\to S$ with
$X_0\to S$ separated.
Then $h_*\sG$ is of finite presentation with proper support over $S$.

The natural morphism $\Hom_{\sO_X/S}(\sF,\sG)\to
\Hom_{\sO_{X_0}/S}\bigl(h_*\sF,h_*\sG\bigr)$ is
a monomorphism since $h$ is affine. As we will see, this monomorphism
is represented by closed immersions. Since
$\Hom_{\sO_{X_0}/S}\bigl(h_*\sF,h_*\sG\bigr)$ is
affine~\cite[Thm.~D]{hall_coho-bc} this will prove that
$\Hom_{\sO_X/S}(\sF,\sG)$ is affine.

Let $\sK$ be the kernel of the surjection $h^*h_*\sF\to
\sF$. A homomorphism $h_*\sF\to h_*\sG$ induces a
homomorphism $h^*h_*\sF\to \sG$, which factors uniquely
through $\sF$ if and only if the composition $\sK\to \sG$ is zero. This happens
if and only if $h_*\sK\to h_*\sG$ is zero. This is a closed
condition since $\Hom_{\sO_{X_0}/S}\bigl(h_*\sK,h_*\sG\bigr)$ is
separated (even affine by~\cite[Thm.~D]{hall_coho-bc}).

If $\sF$ is also intrinsically of finite presentation, then $h_*\sF$ is of
finite presentation.
It follows that $\Hom_{\sO_{X_0}/S}\bigl(h_*\sF,h_*\sG\bigr)$ is
of finite presentation and we conclude that $\Hom_{\sO_X/S}(\sF,\sG)$ is of
finite type.
\end{proof}

\begin{corollary}\label{C:weilr-of-modules}
Let $\map{f}{X}{S}$ be a morphism of algebraic stacks that is separated and locally of approximation type. Let $\sA$ be a quasi-coherent $\sO_X$-algebra. Let $\sG\in\QCoh(X)$
be flat, intrinsically of finite presentation and intrinsically
proper over $S$. Then the sheaf $\weilr_{\sG/S}(\sA)$, which takes a morphism
$\map{h}{T}{S}$ to the set of $h^*\sA$-module structures on $h^*\sG$, is
affine over $S$.
\end{corollary}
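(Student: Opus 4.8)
The plan is to realize $\weilr_{\sG/S}(\sA)$ as a closed subfunctor of an affine $\Hom$-sheaf, in exact parallel with the proof of Theorem~\pref{T:Weilr}~\ref{TI:Weilr-affine}. The key observation is that an $h^*\sA$-module structure on $h^*\sG$ extending the given $\sO_{X_T}$-module structure is the same data as an action map $h^*\sA\otimes_{\sO_{X_T}}h^*\sG\to h^*\sG$ satisfying the unit and associativity axioms. Since pullback commutes with tensor products, such a map is precisely an element of $\Hom_{\sO_X/S}(\sA\otimes_{\sO_X}\sG,\sG)(T)$. So the first step is to define the natural morphism $\weilr_{\sG/S}(\sA)\to \Hom_{\sO_X/S}(\sA\otimes_{\sO_X}\sG,\sG)$ sending a module structure to its action map $\varphi$. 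Applying Theorem~\pref{T:Hom-sheaf} with $\sF=\sA\otimes_{\sO_X}\sG$ (note that no hypothesis is placed on the first argument, only on $\sG$, which here is flat, intrinsically of finite presentation and intrinsically proper over $S$), the target is affine over $S$.

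The second step is to show that this morphism is represented by closed immersions. Writing $\map{\eta}{\sO_X}{\sA}$ for the unit and $\mu$ for the multiplication of $\sA$, a homomorphism $\map{\varphi}{\sA\otimes_{\sO_X}\sG}{\sG}$ defines an $\sA$-module structure extending the $\sO_X$-module structure exactly when the two maps
\begin{align*}
\varphi\circ(\eta\otimes\id{\sG}) - \id{\sG} &\colon \sG \to \sG,\\
\varphi\circ(\mu\otimes\id{\sG}) - \varphi\circ(\id{\sA}\otimes\varphi) &\colon \sA\otimes_{\sO_X}\sA\otimes_{\sO_X}\sG \to \sG
\end{align*}
both vanish. (The first, unit, condition also forces the $\sO_X$-action induced through $\eta$ to coincide with the original one, by $\sO_X$-linearity of $\varphi$, so no separate compatibility needs to be imposed.) Each of these vanishing conditions is closed, because $\Hom_{\sO_X/S}(\sG,\sG)$ and $\Hom_{\sO_X/S}(\sA\otimes_{\sO_X}\sA\otimes_{\sO_X}\sG,\sG)$ are again affine, hence separated, over $S$ by Theorem~\pref{T:Hom-sheaf}. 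Consequently $\weilr_{\sG/S}(\sA)$ is cut out inside the affine $\Hom_{\sO_X/S}(\sA\otimes_{\sO_X}\sG,\sG)$ by closed conditions, and is therefore affine over $S$.

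The only genuinely delicate point is the verification that the unit and associativity constraints are representable by \emph{closed} immersions rather than merely by monomorphisms. This is exactly where separatedness, in fact affineness, of the auxiliary $\Hom$-sheaves enters, and it is the reason Theorem~\pref{T:Hom-sheaf} must be invoked for all three sheaves above rather than only for the first. Everything else, namely base-change compatibility and the identification of $T$-valued points of $\weilr_{\sG/S}(\sA)$ with action maps, is built into the functorial definition of the relative $\Hom$-sheaf, so I expect those steps to be routine.
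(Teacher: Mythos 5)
Your proposal is correct and follows essentially the same route as the paper: embed $\weilr_{\sG/S}(\sA)$ into the affine sheaf $\Hom_{\sO_X/S}(\sA\otimes_{\sO_X}\sG,\sG)$ via the action map, and cut it out by the vanishing of the unit and associativity maps, which are closed conditions because the relevant auxiliary $\Hom$-sheaves $\Hom_{\sO_X/S}(\sF,\sG)$ are affine (hence separated) for arbitrary quasi-coherent $\sF$ by Theorem~\pref{T:Hom-sheaf}. The only cosmetic difference is that the paper first reduces to $S$ affine with $X$ quasi-compact admitting an approximation before invoking Theorem~\pref{T:Hom-sheaf}, a reduction your argument bypasses by applying that theorem directly under the stated hypotheses.
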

Note that $\weilr_{\sG/S}(\sA)
=\weilr_{\Spec \sG\to S}(\Spec(\sA\otimes_{\sO_X} \sG)/\Spec \sG)$
when $\sG$ is a quotient sheaf of $\sO_X$, explaining the notation.
Corollary~\pref{C:weilr-of-modules}
generalizes~\cite[Thm.~3.5]{skjelnes_Weil-restriction-of-mod}: if
$\map{f}{X}{S}$ is \emph{affine}, then $\sG\in\QCoh(X)$ is intrinsically of
finite presentation and intrinsically proper over $S$ if and only if $f_*\sG$ is
of finite presentation (Proposition~\ref{P:intrinsic-fp}).
Thus $\weilr_{\sG/S}(\sA)$ equals the module
restriction functor $\mathscr{M}od^M_{B\to R}$ and we
recover~\cite[Thm.~3.5]{skjelnes_Weil-restriction-of-mod}.
\begin{proof}[Proof of Corollary~\pref{C:weilr-of-modules}]
The question is local on $S$ so we can assume that $S$ is affine and $X\to S$
is quasi-compact and admits an approximation $X\to X_0\to S$.

Consider the functor $\Hom_{\sO_X/S}(\sA\otimes_{\sO_X} \sG,\sG)$, which is an
affine
$S$-scheme by Theorem~\pref{T:Hom-sheaf}. Let $\map{\varphi}{\sA\otimes_{\sO_X}
  \sG}{\sG}$ denote the universal homomorphism (after replacing $S$ with the
$\Hom$-space). The Weil restriction $\weilr_{\sG/S}(\sA)$ is then the 
subfunctor given by the conditions that the maps:
\begin{align*}
\id{\sG}  - \varphi\circ (\eta\otimes \id{\sG}) \colon
  & \sG \to \sG \\
\varphi\circ (\mu\otimes \id{\sG}) - \varphi\circ (\id{\sA} \otimes \varphi)
\colon
  & \sA\otimes_{\sO_X} \sA\otimes_{\sO_X} \sG \to \sG
\end{align*}
vanish, where $\map{\eta}{\sO_X}{\sA}$ is the unit and
$\map{\mu}{\sA\otimes_{\sO_X}\sA}{\sA}$ is the multiplication.
This is a closed subfunctor
since $\Hom_{\sO_X/S}(\sF,\sG)$ is affine for any quasi-coherent $\sO_X$-module
$\sF$.
\end{proof}

\begin{theorem}\label{T:Coh-stack}
Let $\map{f}{X}{S}$ be a morphism of algebraic stacks
that is separated and locally of approximation type. The stack
$\Coh(X/S)$ is algebraic with affine diagonal. If $\sF\in\QCoh(X)$,
then $\Quot(X/S,\sF)$ is a separated algebraic space.
\end{theorem}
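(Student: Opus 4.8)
The plan is to mirror the proof of Theorem~\pref{T:Hilb-stack}: factor $f$ through an approximation $X\overset{h}{\to} X_0\to S$, produce natural push-forward morphisms $\Coh(X/S)\to \Coh(X_0/S)$ and $\Quot(X/S,\sF)\to \Quot(X_0/S,h_*\sF)$, and show that the first is affine while the second is a closed immersion. Algebraicity will then follow from the classical case of $X_0\to S$: when $X_0\to S$ is separated and of finite presentation, $\Coh(X_0/S)$ is algebraic with affine diagonal and $\Quot(X_0/S,\sF_0)$ is a separated algebraic space. Since algebraicity and the nature of the diagonal are fppf-local on $S$, and since every object is supported on a quasi-compact closed substack, I would first reduce, exactly as for the Hilbert stack, to the case where $S$ is affine and $X\to S$ is quasi-compact and admits an approximation $X\overset{h}{\to} X_0\to S$ with $X\to X_0$ affine and $X_0\to S$ separated and of finite presentation (separatedness of $X_0\to S$ being arrangeable because $f$ is separated).

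For $\Coh$, push-forward along the affine morphism $h$ is exact, commutes with base change, and preserves flatness over the base; hence for any $S$-scheme $T$ and any $\sG\in\Coh(X/S)(T)$ the sheaf $(h_T)_*\sG$ is flat over $T$, and by Proposition~\ref{P:intrinsic-fp}\ref{PI:ifp:limit-proper} it is of finite presentation with proper support, so it lies in $\Coh(X_0/S)(T)$. This defines $\map{h_*}{\Coh(X/S)}{\Coh(X_0/S)}$. To compute its fibres, set $\sA=h_*\sO_X$, a quasi-coherent $\sO_{X_0}$-algebra; since $h$ is affine, quasi-coherent $\sO_X$-modules coincide with quasi-coherent $\sA$-modules, so lifting a given $\sG_0\in\Coh(X_0/S)(T)$ to $\Coh(X/S)(T)$ amounts to endowing $\sG_0$ with an $\sA$-module structure, and by Proposition~\ref{P:intrinsic-fp}\ref{PI:ifp:limit-proper} every such structure gives a valid object. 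Thus the fibre of $h_*$ over $\sG_0$ is the Weil restriction $\weilr_{\sG_0/T}(\sA)$, which is affine over $T$ by Corollary~\pref{C:weilr-of-modules}. Hence $h_*$ is affine, and since $\Coh(X_0/S)$ is algebraic with affine diagonal, so is $\Coh(X/S)$: the diagonal of $\Coh(X/S)$ factors as its relative diagonal over $\Coh(X_0/S)$, which is a closed immersion, followed by a base change of the affine diagonal of $\Coh(X_0/S)$.

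For $\Quot$, put $\sF_0=h_*\sF$ and use exactness of $h_*$ to send a quotient $\sF_T\surj\sG$ to $(\sF_0)_T\surj (h_T)_*\sG$, giving $\map{h_*}{\Quot(X/S,\sF)}{\Quot(X_0/S,\sF_0)}$. Over a $T$-point $q_0\colon (\sF_0)_T\surj\sG_0$ with kernel $\sK_0$, a lift to $\Quot(X/S,\sF)$ is an $\sA$-module structure on $\sG_0$ making $q_0$ $\sA$-linear; as $q_0$ is surjective this structure is unique if it exists, and it exists precisely when $\sK_0$ is an $\sA$-submodule of $(\sF_0)_T$, that is, when the action homomorphism $\sA\otimes_{\sO_{X_0}}\sK_0\to \sG_0$ vanishes. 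Because $\sG_0$ is flat, of finite presentation and has proper support over $T$, the space $\Hom_{\sO_{X_0}/T}(\sA\otimes_{\sO_{X_0}}\sK_0,\sG_0)$ is affine, hence separated, by Theorem~\pref{T:Hom-sheaf}, so the vanishing of the universal action homomorphism is a closed condition. Therefore $h_*$ is representable by closed immersions, and $\Quot(X/S,\sF)$ is a closed subspace of the separated algebraic space $\Quot(X_0/S,\sF_0)$, hence itself a separated algebraic space.

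The main obstacle, as I see it, is not formal but lies precisely in the well-definedness and fibre identification of the two push-forward morphisms, which is where the earlier machinery does the real work: Proposition~\ref{P:intrinsic-fp}\ref{PI:ifp:limit-proper} is needed to guarantee that $(h_T)_*\sG$ is finitely presented with proper support, so that it lands in the finite-presentation stack $\Coh(X_0/S)$ and so that every $\sA$-module structure conversely produces an intrinsically finitely presented and intrinsically proper sheaf; and the affineness assertions of Corollary~\pref{C:weilr-of-modules} and Theorem~\pref{T:Hom-sheaf} are exactly what make the fibres (a Weil restriction of modules, respectively a closed vanishing locus) tractable. A secondary technical point is the reduction step—covering $\Coh(X/S)$ and $\Quot(X/S,\sF)$ by their quasi-compact loci and arranging $X_0\to S$ to be separated—but this follows the pattern already established in the proof of Theorem~\pref{T:Hilb-stack}.
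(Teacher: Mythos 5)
Your treatment of $\Coh(X/S)$ is essentially the paper's proof: reduce to an approximation $X\overset{h}{\to}X_0\to S$, check via Proposition~\ref{P:intrinsic-fp}~\ref{PI:ifp:limit-proper} that $h_*$ lands in $\Coh(X_0/S)$, and identify the fibres with the module Weil restrictions $\weilr_{\sG_0/T}(h_*\sO_X)$, which are affine by Corollary~\pref{C:weilr-of-modules}. Where you genuinely diverge is the $\Quot$ statement. The paper's written proof is silent on $\Quot$; the intended deduction (announced in the introduction) is the standard one, obtaining $\Quot(X/S,\sF)$ from the algebraicity of $\Coh(X/S)$ together with the affineness of the $\Hom$-spaces of Theorem~\pref{T:Hom-sheaf}, as in \cite[Cor.~8.2]{hall_coherent-versality}. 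You instead approximate the Quot functor directly, exhibiting $\Quot(X/S,\sF)$ as a closed subspace of $\Quot(X_0/S,h_*\sF)$ cut out by the vanishing of the action map $\sA_T\otimes\sK_0\to\sG_0$. This is correct, and it neatly sidesteps the obstruction the authors flag in the introduction---that a homomorphism $\sF_\lambda\to\sF$ only induces a rational map of Quot functors---because that problem concerns approximating $\sF$ itself by finitely presented sheaves, whereas $h_*$ is exact along the affine morphism $h$ and so preserves surjections and commutes with base change. Two points you should make explicit: first, $h_*\sF$ is merely quasi-coherent, so you need the classical algebraicity and separatedness of $\Quot(X_0/S,\sF_0)$ for a quasi-coherent, not finitely presented, sheaf on the finitely presented stack $X_0$; this is exactly \cite[Cor.~8.2]{hall_coherent-versality}, which the paper cites, so it is available. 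Second, the uniqueness of the $\sA$-module structure making $q_0$ linear is what makes your map a monomorphism, and the closedness of the vanishing locus uses that $\Hom_{\sO_{X_0\times_S T}/T}(\sA_T\otimes\sK_0,\sG_0)$ is affine, hence separated, by \cite[Thm.~D]{hall_coho-bc} since $\sG_0$ is finitely presented, flat and with proper support over $T$. Your route buys an explicit realization of $\Quot(X/S,\sF)$ inside a classical Quot space; the paper's route buys uniformity, deriving all the parameter spaces from $\Coh$ and the $\Hom$-sheaves.
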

\begin{proof}
We argue almost exactly as in the proof of Theorem~\pref{T:Hilb-stack}. First
we reduce to the case where $S$ is affine and $X$ quasi-compact. Next, we further reduce to the case where there is
an approximation $X\overset{h}{\to} X_0\to S$. Then there is a natural morphism
$\map{h_*}{\Coh(X/S)}{\Coh(X_0/S)}$ that takes a sheaf $\sG$ to $h_*\sG$. The
stack $\Coh(X_0/S)$ is algebraic, locally of finite presentation over $S$ and
has affine diagonal~\cite[Thm.~8.1]{hall_coherent-versality}. The morphism
$h_*$ is represented by affine morphisms: given a morphism $T\to \Coh(X_0/S)$,
corresponding to a finitely presented sheaf $\sG$ on $X_0\times_S T$, the
liftings to $\Coh(X/S)$ correspond to the $h_*\sO_X$-module structures on
$\sG$. Thus, $h_*$ is represented by $\weilr_{\sG/T}(h_*\sO_X)$ which is
affine by Corollary~\pref{C:weilr-of-modules}.
%
\end{proof}

If $f$ has finite diagonal, then $f$ is locally of approximation type.
Thus, Theorem~\tref{T:COH} is an immediate
consequence of Theorem~\pref{T:Coh-stack}.

\end{section}


\bibliography{gen-hilb-quot}

\providecommand{\MR}{\relax\ifhmode\unskip\space\fi MR }
\providecommand{\MRhref}[2]{%
  \href{http://www.ams.org/mathscinet-getitem?mr=#1}{#2}
}
\providecommand{\href}[2]{#2}
\begin{thebibliography}{GLS07b}

\bibitem[AOV11]{abramovich-olsson-vistoli_twisted-stable-maps-tame-stacks}
Dan Abramovich, Martin Olsson, and Angelo Vistoli, \emph{Twisted stable maps to
  tame {A}rtin stacks}, J. Algebraic Geom. \textbf{20} (2011), no.~3, 399--477.

\bibitem[Art69]{artin_alg_formal_moduli_I}
M.~Artin, \emph{Algebraization of formal moduli. {I}}, Global Analysis (Papers
  in Honor of K. Kodaira), Univ. Tokyo Press, Tokyo, 1969, pp.~21--71.

\bibitem[DB12]{dibrino_quot-functor}
Gennaro Di~Brino, \emph{The quot functor of a quasi-coherent sheaf}, Preprint,
  Dec 2012, \href{http://arXiv.org/abs/1212.4544}{\mbox{arXiv:1212.4544}}.

\bibitem[Eis95]{eisenbud_comm_alg}
David Eisenbud, \emph{Commutative algebra: with a view toward algebraic
  geometry}, Graduate Texts in Mathematics, vol. 150, Springer-Verlag, New
  York, 1995.

\bibitem[GLS07a]{GLS_Affine_Hilb}
T.~S. Gustavsen, D.~Laksov, and R.~M. Skjelnes, \emph{An elementary, explicit,
  proof of the existence of {H}ilbert schemes of points}, J. Pure Appl. Algebra
  \textbf{210} (2007), no.~3, 705--720.

\bibitem[GLS07b]{GLS_Affine_Quot}
Trond~St{\o}len Gustavsen, Dan Laksov, and Roy~Mikael Skjelnes, \emph{An
  elementary, explicit, proof of the existence of {Q}uot schemes of points},
  Pacific J. Math. \textbf{231} (2007), no.~2, 401--415.

\bibitem[Hal14a]{hall_coho-bc}
Jack Hall, \emph{Cohomology and base change for algebraic stacks}, Math. Z.
  \textbf{278} (2014), no.~1-2, 401--429.

\bibitem[Hal14b]{hall_coherent-versality}
Jack Hall, \emph{Openness of versality via coherent functors}, J. Reine Angew.
  Math. (2014), available online Aug 2014,
  \url{http://dx.doi.org/10.1515/crelle-2014-0057}.

\bibitem[HR14]{hall-rydh_hilbert-stack}
Jack Hall and David Rydh, \emph{The {H}ilbert stack}, Adv. Math. \textbf{253}
  (2014), 194--233.

\bibitem[Lie06]{lieblich_Coh_stack}
Max Lieblich, \emph{Remarks on the stack of coherent algebras}, Int. Math. Res.
  Not. (2006), Art. ID 75273, 12.

\bibitem[Ols05]{olsson_proper-coverings}
Martin Olsson, \emph{On proper coverings of {A}rtin stacks}, Adv. Math.
  \textbf{198} (2005), no.~1, 93--106.

\bibitem[Ols07]{olsson_boundedness-homstacks}
Martin Olsson, \emph{A boundedness theorem for {H}om-stacks}, Math. Res. Lett.
  \textbf{14} (2007), no.~6, 1009--1021.

\bibitem[OS03]{olsson-starr_quot-functors}
Martin Olsson and Jason Starr, \emph{Quot functors for {D}eligne--{M}umford
  stacks}, Comm. Algebra \textbf{31} (2003), no.~8, 4069--4096, Special issue
  in honor of Steven L. Kleiman.

\bibitem[Ryd11]{rydh_hilbert}
David Rydh, \emph{Representability of {H}ilbert schemes and {H}ilbert stacks of
  points}, Comm. Algebra \textbf{39} (2011), no.~7, 2632--2646.

\bibitem[Ryd15]{rydh_noetherian-approx}
David Rydh, \emph{Noetherian approximation of algebraic spaces and stacks}, J.
  Algebra \textbf{422} (2015), 105--147.

\bibitem[Skj15]{skjelnes_Weil-restriction-of-mod}
Roy Skjelnes, \emph{Weil restriction and the {Q}uot scheme}, Algebr. Geom.
  \textbf{2} (2015), no.~4, 514--534,
  \href{http://arXiv.org/abs/1111.4814v2}{\mbox{arXiv:1111.4814v2}}.

\bibitem[SP]{stacks-project}
{The Stacks Project Authors}, \emph{Stacks project},
  \url{http://stacks.math.columbia.edu/}.

\end{thebibliography}
\bibliographystyle{dary}

\end{document}